\newtheorem{Theorem}{Theorem}[section]
\newtheorem{Corollary}[Theorem]{Corollary}
\newtheorem{Proposition}[Theorem]{Proposition}
\newtheorem{Lemma}[Theorem]{Lemma}
\theoremstyle{definition}
\newtheorem{Remark}[Theorem]{Remark}
\newcommand{\Mcg}{\mathrm{Mod}}
\newcommand{\PMcg}{\mathrm{PMod}}
\begin{document}

\title[Automorphisms of the mapping class group]
{Automorphisms of the mapping class group of a nonorientable
surface}

\author{Ferihe Atalan }
\author{B{\l}a\.{z}ej Szepietowski}
\address{Department of Mathematics, Atilim University,
06836  Ankara, Turkey} \email{fatalan@atilim.edu.tr}
\address{Institute of Mathematics, Faculty of Mathematics, Physics and Informatics, University of Gda\'nsk, 80-308 Gda\'nsk, Poland} \email{blaszep@mat.ug.edu.pl}
%\date{\today}
\thanks{The first author supported by TUBITAK-110T665. The second author supported by grants 2012/05/B/ST1/02171 and 2015/17/B/ST1/03235 of  National Science Centre, Poland.}
\subjclass[2010]{20F38, 57N05}\keywords{Nonorientable surface,  mapping class group, outer
automorphism} \pagenumbering{arabic}

\begin{abstract}
Let $S$ be a nonorientable surface of genus $g\ge 5$ with $n\ge 0$ punctures, and $\Mcg(S)$ its
mapping class group. We define the complexity of $S$ to be the maximum rank of a free abelian
subgroup of $\Mcg(S)$. Suppose that $S_1$ and $S_2$ are two such surfaces of the same
complexity. We prove that every isomorphism $\Mcg(S_1)\to\Mcg(S_2)$ is induced by a
diffeomorphism $S_1\to S_2$. This is an analogue of Ivanov's theorem on automorphisms
of the  mapping class  groups of an orientable surface, and also an extension and
improvement of the first author's previous result.
\end{abstract}

\maketitle
\section{Introduction}

Let $\Sigma_{g,b}^n$ (resp. $N_{g,b}^n$)  denote the orientable
(resp. nonorientable) surface of genus $g$ with $b$ boundary
components and $n$ punctures (or distinguished points). If $b$ or
$n$  equals $0$, then we drop it from the notation. Let ${\rm
Mod}(N_{g,b}^n)$ denote the mapping class group of $N_{g,b}^n$,
which is the group of isotopy classes of all diffeomorphisms of
$N_{g,b}^n$, where diffeomorphisms and isotopies are the identity on
the boundary. The mapping class group ${\rm Mod}(\Sigma_{g,b}^n)$ is
defined analogously, but we consider only orientation preserving
maps. The pure mapping class groups $\PMcg(\Sigma_{g,b}^n)$ and
$\PMcg(N_{g,b}^n)$ are the subgroups of $\Mcg(\Sigma_{g,b}^n)$ and
$\Mcg(N_{g,b}^n)$ respectively, consisting of the isotopy classes of
diffeomorphisms fixing each puncture. We denote by  ${\rm
PMod^{+}}(N_{g,b}^n)$ the subgroup of $\PMcg(N_{g,b}^n)$ consisting
of the isotopy classes of diffeomorphisms  preserving local
orientation at each puncture. Finally, let $\mathcal{T}(N_{g,b}^n)$
denote the twist subgroup of ${\PMcg^{+}}(N_{g,b}^n)$ generated by
Dehn twists about all two-sided curves.

We define the {\it complexity} of $N_g^n$, denoted by $\xi\left(N_g^n\right)$,
as the maximum rank of a free abelian subgroup of $\Mcg(N_g^n)$. By \cite{Kuno}, for $g+n>2$ we have
\[\xi\left(N_g^n\right)=
\begin{cases}
\frac{3}{2}(g-1)+n-2 & \textrm{if\ $g$\ is\ odd}\\
\frac{3}{2}g+n-3 & \textrm{if\ $g$\ is\ even.}\\
\end{cases}
\]

The first author proved in \cite{A} that the outer
automorphism group of $\Mcg(N_g)$ is cyclic for $g\ge 5$. In this paper we
improve this result and also extend it to the case of surfaces with punctures.
\begin{Theorem}\label{MainThm1}
For $i=1,2$ let $S_i=N_{g_i}^{n_i}$ be a nonorientable surface of
genus $g_i\ge 5$ with $n_i\ge 0$ punctures, and assume $\xi(S_1)=\xi(S_2)$. Then
every isomorphism
$\Mcg(S_1)\to\Mcg(S_2)$ is induced by a diffeomorphism $S_1\to S_2$.
\end{Theorem}
In particular, for $S_1=S_2$ we obtain the following.
\begin{Corollary}\label{Cor:MainThm1}
The outer automorphism group ${\rm Out( {\rm Mod}}(N_g^n))$ is
trivial for $g\geq 5$ and $n\geq 0$.
\end{Corollary}
The analogous theorem for the mapping class group of an orientable surface
is due to Ivanov \cite{I1}, who proved that if $\Sigma$ is an orientable surface
of genus $g\ge 3$, then every automorphism of $\Mcg(\Sigma)$ is induced by a
diffeomorphism of $\Sigma$, not necessarily orientation preserving. Later,
Ivanov and McCarthy \cite{IM} proved (among other things) that any injective
endomorphism of $\Mcg(\Sigma)$ must be an isomorphism. Finally, by recent
results of Castel \cite{Cast} and Aramayona-Souto \cite{AS}, any nontrivial
endomorphism of $\Mcg(\Sigma)$ must be an isomorphism.
It seems reasonable to expect that Theorem \ref{MainThm1} is true also
for surfaces of genus less than $5$ and sufficiently big complexity. On
the other hand, Corollary \ref{Cor:MainThm1} does not hold for $(g,n)=(2,0)$ or $(3,1)$, see \cite[Proposition 4.5]{A}.

Similarly as in \cite{I1} and \cite{IM}, the main ingredient of
our proof of Theorem \ref{MainThm1} is an algebraic characterization
of Dehn twists (Theorem \ref{Chr-1}), from which we conclude that any
isomorphism  $\Mcg(S_1)\to\Mcg(S_2)$ maps Dehn twists on Dehn twists. However,
unlike for orientable surfaces, $\Mcg(N_g^n)$ is not generated by Dehn
twists (and neither are $\PMcg(N_g^n)$ and $\PMcg^+(N_g^n)$, see \cite{K}
and \cite{SM2}). In Subsection \ref{ss:gens} we fix a finite generating
set of $\PMcg^+(N_g^n)$ consisting of Dehn twists and one crosscap
transposition. By using this generating set we show that any isomorphism
$\Mcg(S_1)\to\Mcg(S_2)$ restricts to an isomorphism
$\PMcg^+(S_1)\to\PMcg^+(S_2)$ of the form $x\mapsto fxf^{-1}$ for
some diffeomorphism $f\colon S_1\to S_2$. Then we conclude Theorem \ref{MainThm1}
by using the following lemma proved in \cite{I1}.
\begin{Lemma}[Ivanov]\label{Ivanov1}
Let $H$ be a normal subgroup of a group $G$ such that the centralizer of $H$ in $G$ is trivial. If
$\varphi\colon G \to G$ is an automorphism such that $\varphi(x)=x$ for all $x\in H$, then $\varphi=\mathrm{id}_{G}$.
\end{Lemma}
We close this introduction by remarking that Corollary \ref{Cor:MainThm1}
together with the fact that the center of $\Mcg(N_g^n)$ is trivial \cite[Corollary 6.3]{SM1},
imply that $\mathrm{Aut}(\Mcg(N_g^n))$ is isomorphic to $\Mcg(N_g^n)$ for $g\ge 5$.

\section{Preliminaries}\label{Prelim}
Let $G$ be a group, $X\subseteq G$ a subset and $x\in G$ an element of
$G$. Then  $C(G)$, $C_G(X)$ and $C_G(x)$ will denote the center of
$G$, the centralizer of $X$ in $G$ and the centralizer of $x$ in
$G$, respectively.

\medskip

Let $g=2\rho+m$ for $\rho\ge 0$, $m\ge 1$. We can represent  $N_g^n$ as an
orientable surface of genus $\rho$ with $n$ punctures and $m$ crosscaps. In
the figures, a crosscap is drawn as a disc with a cross (e.g. Figure \ref{maximality}). This
means that the interior of the disc should be removed from the surface, and then antipodal
points on the resulting boundary component should be identified.

\subsection{Curves and Dehn twists.}
By a \emph{curve} $a$ on a surface $S$ we understand in this paper
an unoriented simple closed curve. According to whether a regular
neighbourhood of $a$ is an annulus or a
M\"obius strip, we call $a$  two-sided or  one-sided  respectively. If $a$ bounds
a disc with at most one puncture or a M\"obius band, then it is called trivial. Otherwise,
we say that it is nontrivial.
Let $S^a$ denote the surface obtained by cutting $S$ along $a$. If
$S^a$ is connected, then we say that $a$ is nonseparating.
Otherwise, $a$ is called separating.
If $a$ is two-sided, then we denote by $t_a$ a Dehn twist about $a$.
On a  nonorientable surface it is impossible to distinguish between
right- and left-handed twists, so the direction of a twist $t_a$ has to be
specified for each curve $a$. Equivalently we may choose an
orientation of a regular neighbourhood of $a$. Then $t_a$ denotes the
right-handed Dehn twist with respect to the chosen orientation. Unless we
specify which of the two twists we mean, $t_a$ denotes any of the
two possible twists.
It is proved in \cite{SM1} that many well known properties of Dehn twists on
orientable surfaces are also satisfied in the nonorientable case. We will use these properties in this paper.

For two curves $a$ and $b$ we denote by $i(a,b)$ their geometric
intersection number (see \cite{SM1} for definition and properties). We say
that  $a$ and $b$ are \emph{equivalent} if there exists a diffeomorphism $h\colon S\to S$ such that $h(a)=b$.

We say that a collection of  curves $\mathcal{C}=\{a_1,\dots,a_k\}$ is
a \emph{multicurve} if the curves $a_i$ are nontrivial,  pairwise disjoint,
pairwise nonisotopic, and none is isotopic to a boundary component of $S$. We
denote by $S^\mathcal{C}$ the surface obtained by cutting $S$ along all curves of $\mathcal{C}$.

\subsection{Pants and skirts.\label{SubS:P-S}} We will use some properties of
pants and skirts (P-S) decompositions  defined in \cite[Section 5]{SM1}.
We say that a multicurve $\mathcal{C}$  is a P-S decomposition
if each $a\in\mathcal{C}$ is two-sided and each component
of $S^\mathcal{C}$ is diffeomorphic to one of the following surfaces:
\begin{itemize}
\item  disc with $2$ punctures (pair of pants of type 1),
\item annulus with $1$ puncture (pair of pants of type 2),
\item sphere with $3$ holes (pair of pants of type 3),
\item M\"obius strip with $1$ puncture (skirt of type 1),
\item M\"obius strip with $1$ hole (skirt of type 2).
\end{itemize}
A P-S decomposition $\mathcal{C}$ is called \emph{separating}
if each $a\in\mathcal{C}$ is a boundary  of two different connected components of $S^\mathcal{C}$.

\begin{Lemma}\label{L:PSdecomp}
Let $S=N_g^n$ for $g\ge 3$, $s=\xi(S)$ if $g\ne 4$, and $s=2+n$ if $g=4$. Suppose
that $a$ is a two-sided  curve on $S$. There exits a P-S
decomposition $\mathcal{C}=\{a_1,\dots,a_s\}$ of $S$, such that
each $a_i$ is equivalent to $a$, if and only if $S^a$ is connected
and nonorientable. Furthermore, if $g+n>3$ then such P-S decomposition must be separating.
\end{Lemma}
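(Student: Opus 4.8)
The plan is to encode a P-S decomposition by a graph and reduce the whole statement to elementary combinatorics. Write $m=g+n-2$; since $\chi(N_g^n)=2-g-n$ and every pantalon or skirt has Euler characteristic $-1$, each P-S decomposition of $S$ has exactly $m$ components. For a P-S decomposition $\mathcal{C}$ let $\Gamma(\mathcal{C})$ be the graph with one vertex for each component of $S^{\mathcal{C}}$ and one edge for each circle $a_i\in\mathcal{C}$, the edge $a_i$ joining the component(s) carrying the two sides of $a_i$. Then $\Gamma(\mathcal{C})$ is connected, every vertex has degree at most $3$ (its degree is the number of boundary circles of the corresponding component), and for each $i$ the surface $S^{a_i}$ obtained by cutting $S$ along $a_i$ alone is connected if and only if $\Gamma(\mathcal{C})$ stays connected after the edge $a_i$ is deleted; equivalently $a_i$ is nonseparating in $S$ iff that edge is not a bridge, and $\mathcal{C}$ is a separating decomposition iff $\Gamma(\mathcal{C})$ has no loops. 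Both directions of the equivalence and the final statement will drop out of this dictionary once the prescribed value of $s$ is used.

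For the ``only if'' direction I would argue by contradiction. Suppose $\mathcal{C}=\{a_1,\dots,a_s\}$ is a P-S decomposition with every $a_i$ equivalent to $a$; circles of a P-S decomposition are two-sided, hence so is $a$, and it remains to exclude the cases that $a$ is separating, or that $a$ is nonseparating with orientable complement. If $a$ is separating then every $a_i$ is separating, so every edge of $\Gamma(\mathcal{C})$ is a bridge, $\Gamma(\mathcal{C})$ is a tree, and $m=s+1$; inserting the formula for $s$ forces $g\le 2$, contrary to $g\ge 3$. If instead $S^a$ is orientable, then $S^{a_1}\cong S^a$ is orientable, and since $a_2,\dots,a_s$ are circles in $S^{a_1}$ whose cutting produces $S^{\mathcal{C}}$, every component of $S^{\mathcal{C}}$ is orientable, hence is a pantalon. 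Writing $c_j$ for the number of pantalons of type $j$, the counts of components, punctures and boundary circles read $c_1+c_2+c_3=m$, $\;2c_1+c_2=n$ and $c_1+2c_2+3c_3=2s$; these give $2s=3m-n$, i.e. $s=\tfrac{3g-6}{2}+n$, again contradicting the prescribed value of $s$. Hence $S^a$ is connected and nonorientable.

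For the ``if'' direction I would exhibit the decomposition. Since $a$ is two-sided, $S^a\cong N_{g-2,2}^n$. If $g=3$ and $n=0$, identifying the two boundary circles of a single skirt of type $2$ (a copy of $N_{1,2}$) gives $N_3$ together with one circle whose complement is $N_{1,2}$, which is what is wanted. If $g+n>3$, set $x=g-3$ for $g$ odd and $x=g-4$ for $g$ even, so that $x\ge 0$ is even; take $n$ pantalons of type $2$, $x$ pantalons of type $3$, and $g-2-x\in\{1,2\}$ skirts of type $2$ --- in all $m$ pieces, of which the $x$ type-$3$ pantalons have three boundary circles and the remaining $m-x$ have two --- arrange the $m$ pieces in a cyclic chain, glue each piece to the next along one boundary circle, and then glue the $x$ remaining boundary circles (one on each type-$3$ pantalon) in pairs. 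The resulting surface is connected, has empty boundary, $n$ punctures, Euler characteristic $-m=2-g-n$, and contains a M\"obius band, so it is homeomorphic to $N_g^n$ and we identify it with $S$. Using that every piece has negative Euler characteristic one checks that the gluing circles $\mathcal{C}$ form a generic family of disjoint circles (none trivial, none isotopic to another), so $\mathcal{C}$ is a P-S decomposition, and counting gives $|\mathcal{C}|=m+\tfrac{x}{2}=s$. Its dual graph is a cycle with extra edges, hence bridgeless and loopless; therefore each $a_i$ is nonseparating and $S^{a_i}$ still contains a M\"obius band, so $S^{a_i}$ is a connected nonorientable surface with two boundary circles, $n$ punctures and Euler characteristic $2-g-n$, i.e. $S^{a_i}\cong N_{g-2,2}^n\cong S^a$. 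By the change of coordinates principle on nonorientable surfaces (cf. \cite{SM1}) there is a single equivalence class of two-sided nonseparating circles on $N_g^n$ with nonorientable complement, so each $a_i$ is equivalent to $a$.

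Finally, for the separating statement, let $g+n>3$ and let $\mathcal{C}$ be \emph{any} P-S decomposition with all circles equivalent to $a$; by the above each circle is nonseparating, so $\Gamma(\mathcal{C})$ is bridgeless and has $m\ge 2$ vertices of degree at most $3$. A loop at a vertex $v$ would either account for both incidences at $v$, making $v$ an isolated vertex of a connected graph with more than one vertex, or (if $v$ has degree $3$) would leave exactly one further edge at $v$, whose removal disconnects $v$ (with its loop) from the rest and hence is a bridge; both are impossible, so $\Gamma(\mathcal{C})$ is loopless and $\mathcal{C}$ is separating. I expect the real work to be in the ``if'' direction --- checking that the necklace of pantalons and skirts closes up to $N_g^n$, verifying the generic-family conditions, getting the circle count to be exactly $s$, and invoking the classification of circles --- whereas the graph dictionary turns the ``only if'' direction and the separating statement into bookkeeping.
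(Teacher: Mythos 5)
Your proof is correct, and it takes a genuinely different (more combinatorial) route through the ``only if'' direction than the paper does. The paper argues geometrically: in the separating case it observes that each $a_i$ must cut off a pantalon of type 1 or a skirt of type 1, giving $s\le n$; in the orientable-complement case it uses the observation that any two of the disjoint circles $a_i,a_j$ jointly separate $S$ (a curve meeting one once and missing the other would be two-sided and one-sided at once), so no type-3 pantalons occur, all pieces are type-2 pantalons, and again $s\le n$. You replace both steps by pure bookkeeping via the dual graph: a tree count ($s=m-1$ forces $g\le 2$) in the separating case, and the component/puncture/boundary count $2s=3m-n$, i.e. $s=\frac{3g-6}{2}+n$, in the orientable case -- which is mechanical, avoids the joint-separation observation entirely, and even renders the exclusion of type-1 pantalons unnecessary. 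Your proof of the final separating claim is essentially the paper's argument (an $a_i$ bounding only one component would force a type-3 pantalon whose third boundary circle separates, i.e. a bridge) translated into graph language. You also spell out the ``necklace'' construction for the ``if'' direction, which the paper leaves to the reader; the only external input there is the standard change-of-coordinates fact that two-sided nonseparating circles with nonorientable complement form a single equivalence class, and your Euler-characteristic check that the gluing circles are nontrivial and pairwise nonisotopic is sound. In short: same skeleton (count the $-\chi$ pieces and compare with $s$), but your counting arguments buy uniformity and less geometric case analysis, while the paper's version is shorter where it can lean on the geometry.
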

\begin{proof} The ``if'' part is left to the reader (see Figure \ref{maximality}).
Suppose that $a$ is separating. Then all $a_i$ are separating. Furthermore,
either each $a_i$  separates a pair of pants of type 1, or  each $a_i$  separates a
skirt of type 1. It follows that $s\le n$, a contradiction. Now suppose that $S^{a_i}$ is
connected and orientable (this is possible only for even $g$). Then every component of $S^\mathcal{C}$ is
a pair of pants of type either 2 or 3. Note, however, that for $i\ne j$ the curves $a_i$, $a_j$ together
separate $S$ (there can be no curve on $S$ disjoint from $a_i$ and intersecting $a_j$ once; such a curve
would be two-sided and one-sided at the same time). It follows that no component of $S^\mathcal{C}$ is a
pair of pants of type 3, hence all components are pairs of pants of type $2$. We have $s\le n$, a contradiction.

Now suppose that $g+n>3$ and $a_i$ is a boundary of only one connected component $P$
of $S^\mathcal{C}$. Because $-\chi(S)=g+n-2>1$, $S^\mathcal{C}$ has more than one
component. It follows that $P$ must be a pair of pants of type 3 and the third
boundary component of $P$ must separate $S$. This is a contradiction, because
all $a_i$ are nonseparating. Thus $\mathcal{C}$ is a separating P-S decomposition of $S$.
\end{proof}
\begin{figure}[hbt]
 \begin{center}
 \includegraphics[width=9cm]{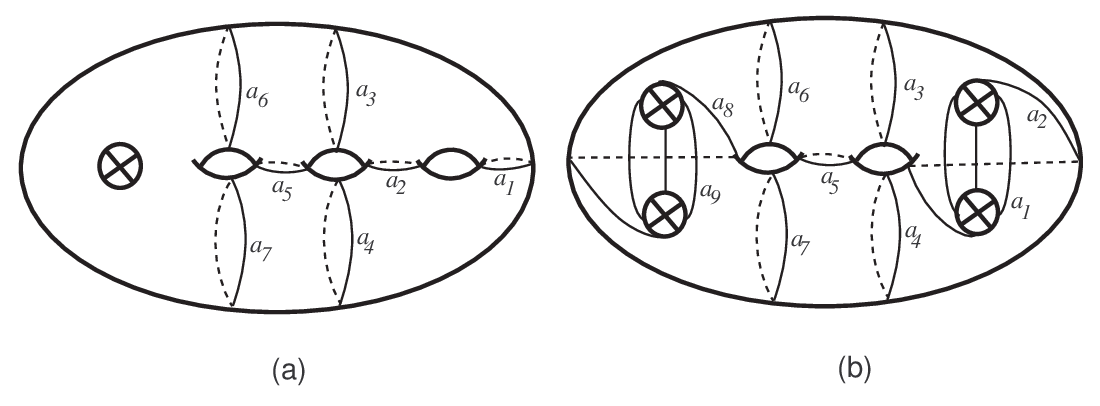}
\caption {P-S decomposition of (a) $N_7$ and (b) $N_8$ satisfying the conditions of  Lemma \ref{L:PSdecomp}.}\label{maximality}
\end{center}
\end{figure}
\begin{Remark}\label{EvenGenusMaxCurv} For $g=4$ there also
exist P-S decompositions of $N_4^n$ of cardinality $\xi(N_4^n)=3+n$. However,
such a P-S decomposition can not consist of nonseparating curves with nonorientable component.
\end{Remark}

\begin{Lemma}\label{L:PSdecomp1}
Let $S=N_g^n$ for $g\ge 3$, $(g,n)\notin\{(3,0),(4,0)\}$ and suppose
that  $\mathcal{C}=\{a_1,\dots,a_s\}$ is a P-S decomposition as in Lemma \ref{L:PSdecomp},
where $s=\xi(S)$ if $g\ne 4$, and $s=2+n$ if $g=4$. For $k\ge 1$ let $T^k_\mathcal{C}$ be
the subgroup of $\Mcg(S)$ generated by $t^k_{a_i}$ for $1\le i\le s$. Then, for each $k\ge 1$:
\begin{itemize}
\item[(a)] $T^k_\mathcal{C}$ is a free abelian group of rank $s$;
\item[(b)] $C_{\Mcg(S)}(T^k_\mathcal{C})=T^1_\mathcal{C}$.
\end{itemize}
\end{Lemma}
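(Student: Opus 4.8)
\emph{Part (a).} The $a_i$ are pairwise disjoint, so the twists $t_{a_i}$ pairwise commute and $T^k_\mathcal C$ is abelian on the $s$ generators $t^k_{a_i}$. I would first check that $T^1_\mathcal C$ is free abelian of rank $s$: a relation $\prod_i t^{m_i}_{a_i}=1$ forces every $m_i=0$, which is the non-orientable analogue of the standard independence of Dehn twists along an essential, pairwise disjoint, pairwise non-isotopic system of circles and is available from \cite{SM1}. Since $T^k_\mathcal C$ is then the index-$k^s$ subgroup $\{\prod_i t^{km_i}_{a_i}\}$ of $T^1_\mathcal C\cong\mathbb Z^s$, it is itself free abelian of rank $s$.

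\emph{Part (b): reduction to a stabiliser.} One inclusion, $T^1_\mathcal C\subseteq C_{\Mcg(S)}(T^k_\mathcal C)$, is clear. For the other, note $C_{\Mcg(S)}(T^k_\mathcal C)=\bigcap_i C_{\Mcg(S)}(t^k_{a_i})$ and that $C_{\Mcg(S)}(t^k_a)=C_{\Mcg(S)}(t_a)$ for every two-sided circle $a$ and every $k\ge1$: since $ft_af^{-1}$ is a Dehn twist about $f(a)$, the equality $ft^k_af^{-1}=t^k_a$ together with the injectivity and intersection-number properties of Dehn twists on non-orientable surfaces \cite{SM1} forces $f(a)=a$ up to isotopy and forces $f$ to preserve the local orientation defining $t_a$, so that $ft_af^{-1}=t_a$. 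Hence $C_{\Mcg(S)}(T^k_\mathcal C)=\Gamma$, the subgroup of $\Mcg(S)$ fixing every $a_i$ together with its local orientation, and it remains to prove $\Gamma=T^1_\mathcal C$.

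\emph{Part (b): cutting along $\mathcal C$.} Realise $f\in\Gamma$ by a diffeomorphism preserving the multicurve $a_1\cup\dots\cup a_s$; as the $a_i$ are pairwise non-isotopic it fixes each $a_i$ setwise and permutes the pieces of $S^\mathcal C$. If it moved a piece $R$ to a distinct piece $R'$, then $R$ and $R'$ would have the same boundary curves (all of them fixed by $f$), so $R\cup R'$ would be a boundaryless, possibly punctured, subsurface, hence all of $S$; but cutting along circles preserves $\chi$ and each model piece of Lemma \ref{L:PSdecomp} has $\chi=-1$, so $S^\mathcal C$ has $g+n-2\ge3$ pieces when $g\ge5$ --- a contradiction. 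Since $\mathcal C$ is a \emph{separating} P-S decomposition (Lemma \ref{L:PSdecomp}), $f$ also cannot interchange the two sides of any $a_i$. Thus $f$ lies in the image of the gluing homomorphism $\Phi\colon\prod_j\Mcg(R_j)\to\Mcg(S)$ from the mapping class groups of the pieces $R_j$ of $S^\mathcal C$; here $\ker\Phi$ is generated by the differences $t_{a'_i}t_{a''_i}^{-1}$, where $a'_i,a''_i$ are the two copies of $a_i$ in $S^\mathcal C$ (the non-orientable cutting sequence of \cite{SM1}), $\Phi$ maps the subgroup $B$ generated by all boundary twists onto $T^1_\mathcal C$, and, tracking the local-orientation condition through the gluing, $\Phi^{-1}(\Gamma)=\prod_j\Mcg^{+}(R_j)$, where $\Mcg^{+}(R_j)$ consists of the mapping classes of $R_j$ preserving the local orientation at every boundary circle (this is automatically all of $\Mcg(R_j)$ for the orientable pieces). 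Consequently $\Gamma=T^1_\mathcal C$ reduces to showing that $\Mcg^{+}(R_j)$ is generated by boundary twists for each piece.

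\emph{Part (b): the pieces, and the main obstacle.} Because every $a_i$ is non-separating in $S$ (Lemma \ref{L:PSdecomp}), no piece is a pantalon or a skirt of type $1$ --- its single boundary circle would separate $S$ --- so each $R_j$ is a three-holed sphere, a once-punctured annulus, or a M\"obius strip with one hole $N_{1,2}$. For the three-holed sphere $\Mcg\cong\mathbb Z^3$ is freely generated by the three boundary twists; for the once-punctured annulus $\Mcg\cong\mathbb Z^2$ is generated by the two boundary twists, the point-pushing map being $t_{\partial_1}t_{\partial_2}^{-1}$. The remaining case $R_j\cong N_{1,2}$ is the heart of the argument and the step I expect to be the main obstacle: one must control $\Mcg(N_{1,2},\partial)$ --- using that it has no crosscap slide, that its only non-boundary-parallel two-sided circle bounds a M\"obius band, and the structure of mapping class groups of small non-orientable surfaces --- to conclude that $\Mcg^{+}(N_{1,2})$ is generated by the two boundary twists (in particular that the twist about the M\"obius-bounding circle and the squares of any boundary slides contribute nothing modulo $T^1_\mathcal C$); I would take the required small-surface facts from \cite{SM1}. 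Granting this, $\prod_j\Mcg^{+}(R_j)=B$, so $\Gamma=\Phi(B)=T^1_\mathcal C$, which completes the proof.
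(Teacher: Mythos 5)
Your proof is correct and follows essentially the same route as the paper: elements of the centralizer fix each $a_i$ together with its local orientation, cannot permute the pieces of $S^{\mathcal C}$, and restrict to each piece as a product of boundary twists, the last step resting (exactly as in the paper) on the fact from \cite{SM1} that the mapping class groups of pantalons of type $2$ and $3$ and of the skirt of type $2$ are generated by boundary twists. The only differences are cosmetic: you rule out permuted pieces by a uniform boundary-matching and Euler-characteristic count where the paper runs a short case analysis ending in excluded surfaces of genus at most $4$, and your cutting homomorphism $\Phi$ with its kernel description is extra machinery the paper's direct restriction argument does not need.
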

\begin{proof}
The  assertion (a) follows from \cite[Proposition 4.4]{SM1}. To prove (b) we use
an idea from the proof of \cite[Theorem 6.2]{SM1}. Suppose $f\in C_{\Mcg(S)}(T^k_\mathcal{C})$. Then
$t^k_{a_i}=ft^k_{a_i}f^{-1}=t^k_{f(a_i)}$ for all $i$. It follows that $f$  fixes each curve $a_i$,
hence it permutes the connected components of $S^\mathcal{C}$. Suppose that $f$ interchanges
some two components $P_1$ and $P_2$ of $S^\mathcal{C}$. By the proof of Lemma \ref{L:PSdecomp},
there are no pairs of pants of type 1 and no skirts of type 1 in the decomposition.  Suppose
that $P_1$ and $P_2$ are skirts of type 2 glued along a curve $a_i$. Then the remaining
boundary curves $a_j\subset P_1$ and  $a_l\subset P_2$ must be glued
together ($a_l=f(a_j)=a_j$), hence $S$ is the closed nonorientable
surface of genus $4$, contrary to the assumptions of the lemma. Similarly, if $P_1$ and $P_2$ are
pairs of pants of type 2 or 3, then $S$ must be a Klein bottle with two
punctures, or a closed nonorientable surface of genus $4$ respectively,
which again  contradicts the assumptions. Thus $f$ fixes each component of
$S^\mathcal{C}$. Furthermore, since $f$ centralizes the boundary twists
of each pair of pants, it preserves its orientation. Because the mapping
class groups of a pair of pants of type 2 or 3, and that of the skirt of
type 2 are generated  by boundary twists, $f$ is a product of some
powers of $t_{a_i}$ for $1\le i\le s$. Thus $C_{\Mcg(S)}(T^k_\mathcal{C})\subseteq T^1_\mathcal{C}$
and the opposite inclusion is obvious.
\end{proof}
Note that (b) of Lemma \ref{L:PSdecomp1} implies that $T^1_\mathcal{C}$ is a maximal abelian subgroup of $\Mcg(S)$.

\subsection{Pure subgroups.} Let $S$ denote the surface  $N_{g}^n$ for $g\ge 3$ and $n\ge 0$. We
recall from \cite{A} the construction of finite index pure subgroups
$\Gamma_m(S)$ of $\Mcg(S)$ (see Section 2 of \cite{A} for more details). Fix an orientable double cover
$\Sigma=\Sigma_{g-1}^{2n}$ of $S$. Then $\Mcg(S)$ can be identified
with the subgroup of $\Mcg(\Sigma)$, consisting of the isotopy
classes of diffeomorphisms commuting with the covering involution.
Consequently, $\Mcg(S)$ acts on $H_1(\Sigma,\mathbb{Z}/m\mathbb{Z})$
for all $m\ge 0$. We define $\Gamma_m(S)$ to be the subgroup of
$\Mcg(S)$ consisting of all elements inducing the identity on
$H_1(\Sigma,\mathbb{Z}/m\mathbb{Z})$. If $m\ge 3$, then $\Gamma_m(S)$
is a pure subgroup of $\Mcg(S)$. In particular, if $f\in\Gamma_m(S)$ preserves a
multicurve $\mathcal{C}$, then $f$ fixes each curve of  $\mathcal{C}$ and, furthermore,
it can be represented by a diffeomorphism equal to the identity on a regular neighbourhood
of each curve of $\mathcal{C}$. If the restriction of $f$ to any connected component
of  $S^{\mathcal{C}}$ is isotopic (by an isotopy that does not have to fix pointwise
the boundary components of $S^{\mathcal{C}}$) either to the identity or to a pseudo-Anosov
map, then $\mathcal{C}$ is called a \emph{reduction system} for $f$. The intersection of
all reduction systems for $f$ is called the \emph{canonical reduction system} for $f$. Reduction
systems were introduced by Birman, Lubotzky and McCarthy in \cite{BLM}, for the case of a nonorientable surface see \cite{Wu}.

\subsection{Algebraic characterization of a Dehn twist.}
The key ingredient of the proof of our main result is an algebraic
characterization of a Dehn twist about a nonseparating curve in the
mapping class group.
Theorem \ref{Chr-1} below is an extension of Theorem 3.1 of \cite{A}
to punctured surfaces. The proof  closely follows
Ivanov's ideas \cite{I1}.

\begin{Theorem}\label{Chr-1}
Let $S=N_g^n$ for $g\geq
3$, $(g,n)\notin\{(3,0),(4,0)\}$ and let $\Gamma$ be
a finite index subgroup of $\Gamma_m(S)$ for $m\ge 3$. An element $f
\in \Mcg(S)$ is a Dehn twist about a nonseparating curve with nonorientable complement if and only
if the following conditions are satisfied:
\begin{itemize}
\item[(i)] $C(C_{\Gamma}(f^{k})) \cong \mathbb{Z}$, for any integer $k \neq 0$
such that $f^{k} \in \Gamma$.
\item[(ii)] Set $s=\xi(S)$ if $g\ne 4$, and $s=2+n$ if $g=4$. There exist elements $f_2,\dots,f_s\in\Mcg(S)$, each conjugate to $f_1=f$, such that
$f_1,\dots,f_s$ generate a free abelian group $K$ of rank $s$.
\item[(iii)] For $k\ge 1$ let $K_k$ be the subgroup of $\Mcg(S)$ generated by $f_i^k$ for
$1\le i\le s$. Then $C_{\Mcg(S)}(K_k)=K$.
\end{itemize}
\end{Theorem}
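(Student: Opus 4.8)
The plan is to prove the two implications separately, following the strategy of Ivanov \cite{I1} adapted to punctured nonorientable surfaces. For the ``only if'' direction, suppose $f=t_a$ with $a$ nonseparating and $S^a$ connected and nonorientable. Then (ii) and (iii) are read off from Lemmas \ref{L:PSdecomp} and \ref{L:PSdecomp1}: Lemma \ref{L:PSdecomp} supplies a P-S decomposition $\mathcal{C}=\{a_1,\dots,a_s\}$ with every $a_i$ equivalent to $a$; I set $f_i=t_{a_i}$, choosing the diffeomorphisms carrying $a$ to $a_i$ (and reversing a local orientation where needed) so that each $f_i$ is genuinely conjugate to $f$; then Lemma \ref{L:PSdecomp1}(a) gives (ii) with $K=T^1_\mathcal{C}$, and Lemma \ref{L:PSdecomp1}(b) applied to $K_k=T^k_\mathcal{C}$ gives (iii). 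For (i) I would analyse the centraliser of $f^k=t_a^k$: since $a$ is nonseparating this is the stabiliser of the isotopy class of $a$ together with an orientation of its annular neighbourhood, a group controlled by $\Mcg(N_{g-2,2}^n)$ (the surface cut along $a$, with its two new boundary circles possibly interchanged). The centre of this group, hence of its finite index subgroup $C_{\Gamma'}(f^k)$, is generated by the two boundary twists, both of which become $t_a$ after regluing, so $C(C_{\Gamma'}(f^k))$ is infinite cyclic. This step rests on the nonorientable analogues of Ivanov's results on centralisers of Dehn twists, available through \cite{SM1}.

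For the ``if'' direction, assume (i)--(iii). Since $g\ge 5$ we have $s\ge 4$, so $K$ has rank at least $2$; hence $f=f_1$ commutes with an infinite order element outside $\langle f\rangle$, so $f$ is not pseudo-Anosov, and being an infinite order element of the torsion free group $K$ it is reducible. I would replace $f$ by a power $f^k$ lying in the pure group $\Gamma'(m)$ and write $f^k=t_{c_1}^{e_1}\cdots t_{c_r}^{e_r}\varphi$, where $\{c_1,\dots,c_r\}$ is the canonical reduction system of $f^k$ and $\varphi$ is the identity or a pseudo-Anosov on each complementary piece. The key use of (i) is to force $r=1$ and $\varphi=\mathrm{id}$: if $r\ge 2$, then suitable powers of $t_{c_1}$ and $t_{c_2}$ lie in $C_{\Gamma'}(f^k)$ and, by purity, are central in it, so $C(C_{\Gamma'}(f^k))$ has rank at least $2$; and if $\varphi$ is pseudo-Anosov on a piece $P$, then a power of a primitive root of $\varphi|_P$ together with a boundary twist of $P$ again produces two independent elements of $C(C_{\Gamma'}(f^k))$. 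Either way (i) fails, so $f^k=t_c^{e}$ for a single two-sided circle $c$; then $f$ fixes $c$ and restricts to a finite order map on $S^c$, and this finite order part is trivial (again by (i), or by purity of $\Gamma'(m)$), giving $f=t_c^{d}$ with $d\ne 0$.

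To finish the ``if'' direction I would identify $d$ and the topological type of $c$. Writing $f_i=g_ifg_i^{-1}$ and $a_i=g_i(c)$, the $t_{a_i}$ pairwise commute, so $\{a_1,\dots,a_s\}$ is a generic family of $s$ pairwise disjoint two-sided circles, each equivalent to $c$. If $|d|\ge 2$, then $K=\langle t_{a_i}^{d}\rangle$ whereas $C_{\Mcg(S)}(K_k)\supseteq\langle t_{a_i}\rangle$, a free abelian group of rank $s$ strictly larger than $K$, contradicting (iii); hence $d=\pm 1$ and $f=t_c$ is a Dehn twist. If $c$ were separating, then all $a_i$ would be separating and cutting $S$ along them would produce a tree of $s+1$ complementary pieces, each of negative Euler characteristic (by genericity), so $s+1\le-\chi(S)=g+n-2$; and if $S^c$ were connected but orientable, the argument from the proof of Lemma \ref{L:PSdecomp} shows that any two of the $a_i$ together separate $S$, which as in that proof bounds the number of complementary pieces by $g+n-2$ and forces $s\le g+n-2$. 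Both inequalities are incompatible with the value of $s$ for $g\ge 5$, so $c$ is nonseparating with nonorientable complement.

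I expect the main obstacle to be the two appeals to condition (i) in combination with the Thurston--Nielsen classification: showing in the ``only if'' direction that the centre of the centraliser of a power of a nonseparating twist is exactly infinite cyclic, and deducing in the ``if'' direction that the canonical reduction system of $f$ is a single curve carrying no pseudo-Anosov part. Both require a careful development, in the nonorientable category, of the Birman--Lubotzky--McCarthy and Ivanov theory of centralisers, canonical reduction systems and pure mapping classes; by comparison the remaining combinatorial steps (the Euler characteristic counts and the comparison with P-S decompositions) are routine given Lemmas \ref{L:PSdecomp} and \ref{L:PSdecomp1}.
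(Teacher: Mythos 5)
Your overall strategy (control the canonical reduction system of a pure power of $f$ via (i), then use (ii)--(iii) together with the P-S lemmas) is the same as the paper's, but two steps of your ``if'' direction have genuine gaps. First, you claim that condition (i) alone forces the pseudo-Anosov part to vanish, by pairing ``a power of a primitive root of $\varphi|_P$'' with ``a boundary twist of $P$''. This fails precisely in the hardest case, when the canonical reduction system of $f^k$ consists only of one-sided circles (which (i) allows: it only bounds the number of \emph{two-sided} reduction circles by one, since only those carry twists). Then every boundary circle of the pseudo-Anosov piece bounds a M\"obius band in $S$, and the twist about such a circle is trivial in $\Mcg(S)$ (this is exactly why the paper calls such circles trivial), so your second central element disappears; in this configuration $C(C_{\Gamma'}(f^k))$ can perfectly well be infinite cyclic, so no contradiction with (i) is available. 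The paper rules out this case using (ii)--(iii) instead: $H=\Gamma'\cap K_k$ embeds via the cutting monomorphism $\theta$ into $\Mcg(S')$ as an abelian group of rank $s>1$ containing the pseudo-Anosov $\theta(f^k)$, which is impossible. (When a two-sided reduction circle $a$ is present, the paper kills all remaining structure at once: $\langle f^k,t_a\rangle\cap\Gamma'$ lies in $C(C_{\Gamma'}(f^k))\cong\mathbb{Z}$, whence $f^{k_1}=t_a^m$.)

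Second, your passage from $f^k=t_c^e$ to $f=t_c^d$ is not justified: purity applies to $f^k\in\Gamma'$, not to $f$ itself, and (i) does not obviously exclude that $f$ is a power of $t_c$ composed with a nontrivial finite-order part supported on $S^c$. The paper avoids this by proving that $\{a_1,\dots,a_s\}$ is a P-S decomposition (complete it and use $C_{\Mcg(S)}(K_{k_1})=K$ to bound the rank), so that by Lemma \ref{L:PSdecomp1}(b) one gets $K=T^1_{\mathcal{C}}$; since $f=f_1$ is a primitive element of $K$ and $f^{k_1}=t_{a_1}^m$, it follows that $f=t_{a_1}^{\pm1}$. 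You never show the $a_i$ form a P-S decomposition, and your substitute arguments (the Euler characteristic counts for the separating and orientable-complement cases, and the exclusion of $|d|\ge 2$ via (iii)) are fine but do not fill this hole. A smaller point: in the ``only if'' direction, ``the centre of this group, hence of its finite index subgroup'' is not a valid inference (centres do not pass to finite-index subgroups); the paper leaves that direction to the reader, but you would need Ivanov's actual centraliser argument there.
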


\begin{proof}
Assume that the above conditions are satisfied, then we have to show
that $f$ is a Dehn twist about a nonseparating curve with nonorientable complement.

Choose any integer $k\neq 0$ such that $f^{k}\in \Gamma$.
Because $f$ has infinite order by (ii), $f^{k}$ is not the identity element.
Let $\mathcal{C}$ be the canonical reduction system for $f^k$. Let
$G$ denote the subgroup generated by the twists about the two-sided
curves in $\mathcal{C}$. Set $G' = G \cap \Gamma$. Then $G$ and
$G'$ are free abelian groups. Firstly, we will show that $G' \subset
C(C_{\Gamma}(f^{k}))$. Let $g \in C_{\Gamma}(f^{k})$. Since $g$
commutes with  $f^{k}$, it preserves the canonical reduction system
$\mathcal{C}$. Because $g$ is pure, it fixes each curve of
$\mathcal{C}$ and also preserves orientation of a regular
neighbourhood of each two-sided curve of $\mathcal{C}$. It follows
that $g$ commutes with each generator $G$, hence $G \subseteq
C_{{\rm M} (S)}(C_{\Gamma}(f^{k}))$. So, $G' \subset
C_{\Gamma}(C_{\Gamma}(f^{k}))= C(C_{\Gamma}(f^{k}))$. For the
last equality observe that, since $f^k\in C_{\Gamma}(f^k)$,
$C_{\Gamma}(C_{\Gamma}(f^{k}))\subseteq C_{\Gamma}(f^{k})$, hence
$C_{\Gamma}(C_{\Gamma}(f^{k}))\subseteq C(C_{\Gamma}(f^{k}))$ and the opposite inclusion is obvious.
The assumption  $C(C_{\Gamma}(f^{k}))=\mathbb{Z}$ implies that
$\mathcal{C}$ contains at most one two-sided curve.

Assume that
$\mathcal{C}$ has no two-sided curve, so that $\mathcal{C} =
\{c_{1}, \cdots, c_{l} \}$, where each $c_{i}$ is a one-sided
curve. Then $S^{\mathcal{C}}$ is connected.
Let $\mathrm{Stab}^+(\mathcal{C})$ be the subgroup of $\Mcg(S)$ consisting of elements
fixing each curve of $\mathcal{C}$ and preserving its orientation. Note that $C_{\Gamma}(f^k)\subseteq\mathrm{Stab}^+(\mathcal{C})$.
The mapping $h\mapsto h_{\mid  S^{\mathcal{C}}}$ defines  an
isomorphism $\mathrm{Stab}^+(\mathcal{C})\to\Mcg(S^\mathcal{C})/{\mathbb{Z}^l}$, where $\mathbb{Z}^l$ is
the subgroup generated by the boundary twists of $S^\mathcal{C}$ (see \cite[Section 4]{Szep_Osaka}). We
also have a monomorphism $\Mcg(S^\mathcal{C})/{\mathbb{Z}^l}\to\Mcg(S')$, where $S'$ is the surface
obtained from $S^\mathcal{C}$ by collapsing each boundary component to a puncture. By composing
these two maps we obtain a monomorphism $\theta\colon\mathrm{Stab}^+(\mathcal{C})\to\Mcg(S')$. Because
$\mathcal{C}$ is the canonical reduction system for $f^k$, $\theta(f^k)$ is either the identity or
pseudo-Anosov. In the former case $f^k$ must be the identity, a contradiction. Suppose $\theta(f^k)$ is
pseudo-Anosov. Set $H=\Gamma\cap K_k$, where $K_k$ is the group from condition (iii). We have
$H\subseteq C_{\Gamma}(f^k)\subseteq\mathrm{Stab}^+(\mathcal{C})$ and $\theta(H)$ is a
free abelian subgroup of $\Mcg(S')$ containing $\theta(f^k)$. Since $\theta(f^k)$ is
pseudo-Anosov, $\theta(H)$ must have rank $1$. This is a contradiction, as $H$ has rank $s>1$.

We have $\mathcal{C}= \{c_{1}, \cdots, c_{l}, a\}$, where $a$ is a
two-sided curve and each $c_i$ is one-sided.
Let  $D$ be the subgroup generated by $f^{k}$ and the twist about
$a$ and denote the intersection $D \cap \Gamma$ by $D'$. Hence, $D'
\subset C(C_{\Gamma}(f^{k}))$ and hence $D'$ is isomorphic to
$\mathbb{Z}$. It follows that $f^{k_1}=t_{a}^{m}$ for some integers
$m$ and $k_1$ (possibly greater than $k$).

Let $f_1,\dots,f_s$ be the elements from condition (ii).
For $1\le i\le s$ we have $f_i^{k_1}=t_{a_i}^m$ for some curve $a_i$ equivalent to
$a_1=a$. We claim that $\mathcal{C}=\{a_1,\dots,a_s\}$ is a P-S decomposition of $S$. If not,
then we can complete $\mathcal{C}$ to a P-S decomposition $\mathcal{C}'$. Let $T_{\mathcal{C'}}$ be
the free abelian group generated by twists about the curves of  $\mathcal{C}'$. We have
$T_{\mathcal{C'}}\subseteq C_{\Mcg(S)}(K_{k_1})=K$. It follows that
$\mathrm{rank}(T_{\mathcal{C'}})\le s$, hence $\mathcal{C'}=\mathcal{C}$.
By (iii) and (b) of Lemma \ref{L:PSdecomp1} we have
$K=C_{\Mcg(S)}(K_{k_1})=C_{\Mcg(S)}(T^m_{\mathcal{C}})=T^1_{\mathcal{C}}$. By
(ii) $f$ is a primitive element of $K=T^1_{\mathcal{C}}$, hence $f=t_{a_1}$. It
follows from Lemma \ref{L:PSdecomp} that $a_1$ is nonseparating and has  nonorientable complement.

The proof of the opposite implication is straightforward and left to
the reader (see \cite{I1}).
\end{proof}

\begin{Corollary}\label{C:TtoT}
For $i=1,2$ let $S_i=N_{g_i}^{n_i}$ for $g_i\ge 5$ and assume $\xi(S_1)=\xi(S_2)$.
Suppose that
$\varphi\colon\Mcg(S_1)\to\Mcg(S_2)$ is an isomorphism. If
$f\in\Mcg(S_1)$ is a Dehn twists about a nonseparating curve with
nonorientable complement, then so is $\varphi(f)$.
\end{Corollary}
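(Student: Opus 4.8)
The plan is to deduce Corollary~\ref{C:TtoT} directly from the algebraic characterization of Dehn twists given in Theorem~\ref{Chr-1}. The strategy is that the characterizing conditions (i)--(iii) are stated entirely in terms of the group structure of $\Mcg(N_g^n)$ (centralizers, centers, conjugacy, abelian subgroups of prescribed rank), so they are preserved by any isomorphism $\varphi$; one only has to be careful about the parameter $\Gamma'$ appearing in condition (i), since it is not an intrinsic subgroup but a chosen finite-index subgroup of $\Gamma'(m)$.

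First I would invoke Theorem~\ref{Chr-1} for $f$: since $f$ is a Dehn twist about a nonseparating circle with nonorientable complement, there is some $m\ge 3$ and some finite-index subgroup $\Gamma'$ of $\Gamma'(m)$ for which (i), (ii), (iii) hold (indeed one may just take $\Gamma'=\Gamma'(m)$). Now set $\Gamma''=\varphi(\Gamma')$. Since $\varphi$ is an isomorphism, $\Gamma''$ is a finite-index subgroup of $\Mcg(N_g^n)$; the point I need is that it is a finite-index subgroup of $\Gamma'(m')$ for some $m'\ge 3$. This follows because every finite-index subgroup of $\Mcg(N_g^n)$ contains $\Gamma'(m')$ for $m'$ a suitable multiple of $3$: the groups $\Gamma'(m')$ form a cofinal family of finite-index normal subgroups (each $\Gamma'(m')$ is the kernel of the action on $H_1(\Sigma,\mathbb{Z}/m'\mathbb{Z})$, a finite group, and these kernels intersect trivially and are cofinal in the profinite topology), so $\Gamma''\supseteq\Gamma'(m')$ for some $m'\ge 3$ and hence $\Gamma''$ is itself a finite-index subgroup of $\Gamma'(m')$.

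Next I would check that $\varphi(f)$ satisfies conditions (i)--(iii) of Theorem~\ref{Chr-1} with respect to $\Gamma''$. For (i): for any $k\ne 0$ with $\varphi(f)^k\in\Gamma''$ we have $f^k\in\Gamma'$ (as $\varphi$ restricts to an isomorphism $\Gamma'\to\Gamma''$), and $\varphi$ carries $C_{\Gamma'}(f^k)$ isomorphically onto $C_{\Gamma''}(\varphi(f)^k)$ and hence $C(C_{\Gamma'}(f^k))$ onto $C(C_{\Gamma''}(\varphi(f)^k))$; so the latter is $\cong\mathbb{Z}$. For (ii): with $f_1=f,f_2,\dots,f_s$ the elements from (ii), the images $\varphi(f_1),\dots,\varphi(f_s)$ are pairwise conjugate (to $\varphi(f)$) and generate $\varphi(K)$, a free abelian group of the same rank $s$. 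For (iii): $\varphi$ sends $K_k$ (generated by the $f_i^k$) to the subgroup generated by the $\varphi(f_i)^k$, and sends the centralizer $C_{\Mcg(N_g^n)}(K_k)=K$ to $C_{\Mcg(N_g^n)}(\varphi(K_k))$, giving equality with $\varphi(K)$. Having verified (i)--(iii) for $\varphi(f)$ and $\Gamma''$, Theorem~\ref{Chr-1} yields that $\varphi(f)$ is a Dehn twist about a nonseparating circle with nonorientable complement, as required.

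The only genuine subtlety — and the step I would flag as needing the most care — is the claim that $\varphi(\Gamma')$ is again a finite-index subgroup of some $\Gamma'(m')$ with $m'\ge 3$; everything else is the formal observation that (i)--(iii) are isomorphism-invariant. If one prefers to avoid the cofinality argument, an alternative is to prove and use a version of Theorem~\ref{Chr-1} in which condition~(i) is quantified as ``there \emph{exists} a finite-index subgroup $\Gamma'$ of some $\Gamma'(m)$ such that \dots'', since the ``if'' direction of Theorem~\ref{Chr-1} only requires the conditions to hold for \emph{one} such $\Gamma'$; then $\Gamma''=\varphi(\Gamma')$ is finite-index in $\Mcg(N_g^n)$ and one chooses any $m'\ge 3$ with $\Gamma'(m')\subseteq\Gamma''$. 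Either way the corollary follows immediately once the parameter subgroup is handled, so I do not expect any computational difficulty beyond this bookkeeping.
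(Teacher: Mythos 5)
The transport of conditions (ii) and (iii) under $\varphi$ is fine, and so is the observation that (i) is carried by $\varphi$ from $(f,\Gamma')$ to $(\varphi(f),\varphi(\Gamma'))$. The gap is exactly at the step you flagged: your justification that $\varphi(\Gamma'(m))$ can serve as the parameter subgroup is broken in two ways. First, the cofinality claim is not available and is in fact false for this family: the intersection $\bigcap_{m}\Gamma'(m)$ is the kernel of the action of $\Mcg(N_g^n)$ on $H_1(\Sigma;\mathbb{Z})$, which is nontrivial (for instance it contains the twist about a circle bounding an orientable subsurface of $N_g^n$, since its two lifts are separating curves of $\Sigma$); since the mapping class group is residually finite, there are finite-index subgroups containing no $\Gamma'(m')$ at all, so the $\Gamma'(m')$ are not cofinal in the profinite topology, and ``every finite-index subgroup contains some $\Gamma'(m')$'' is a claim you cannot make. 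Second, even granting it, your conclusion is a non sequitur: from $\Gamma''\supseteq\Gamma'(m')$ you cannot deduce that $\Gamma''$ is a finite-index subgroup \emph{of} $\Gamma'(m')$ --- the containment goes the wrong way, and what Theorem \ref{Chr-1} actually needs is $\Gamma''\subseteq\Gamma'(m')$, because the proof uses that every element of $\Gamma'$ is \emph{pure}. Your proposed alternative does not repair this: knowing (i) for $\Gamma''=\varphi(\Gamma'(m))$ does not give (i) for a smaller subgroup such as $\Gamma'(m')\subseteq\Gamma''$ (centers of centralizers can grow when the ambient group shrinks), and it still invokes the same cofinality statement.

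The fix is much simpler and is what the paper does: use the fact that the geometric (``only if'') direction of Theorem \ref{Chr-1} holds for \emph{every} finite-index subgroup $\Gamma'$ of $\Gamma'(m)$. Since $f$ is a twist about a nonseparating circle with nonorientable complement, it satisfies (i)--(iii) with $\Gamma'=\Gamma'(m)\cap\varphi^{-1}(\Gamma'(m))$, which has finite index in $\Gamma'(m)$. Applying $\varphi$ transports these conditions to $\varphi(f)$ with parameter subgroup $\varphi(\Gamma'(m))\cap\Gamma'(m)$, which is again a finite-index subgroup of $\Gamma'(m)$, so the ``if'' direction of Theorem \ref{Chr-1} applies to $\varphi(f)$ directly. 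This intersection trick avoids any statement about where $\varphi(\Gamma'(m))$ sits relative to the family $\{\Gamma'(m')\}$, which is the point your argument misses.
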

\begin{proof} Fix $m\ge 3$.  Because $f$ satisfies the conditions (i), (ii),
(iii) of Theorem \ref{Chr-1} with $\Gamma_m(S_1)$ as $\Gamma$, it follows that
$\varphi(f)$ also satisfies  (i), (ii), (iii) of Theorem \ref{Chr-1} with $\Gamma=\varphi(\Gamma_m(S_1))\cap\Gamma_m(S_2)$.
\end{proof}
\subsection{Chains.}
A sequence $(a_1,\dots,a_k)$ of curves is called a chain if
$i(a_i,a_{i+1})=1$ for $1\le i\le k-1$ and $i(a_i,a_j)=0$ for
$|i-j|>1$.  The integer $k\geq 1$ is called the length of the chain.
If all curves in a chain are two-sided, then a regular
neighbourhood of the union of these curves is orientable. Let
$t_{a_i}$ be right-handed Dehn twists  with respect to some  orientation of
such a neighbourhood for $1\le i\le k$. Then
\begin{itemize}
\item[(a)] $t_{a_i}t_{a_{i+1}}t_{a_i}=t_{a_{i+1}}t_{a_i}t_{a_{i+1}}$ for $1\le i\le k-1$
\item[(b)] $t_{a_i}t_{a_j}=t_{a_j}t_{a_i}$ for $|i-j|>1$.
\end{itemize}
Conversely, if a sequence of Dehn twists $(t_{a_1},\cdots,t_{a_k})$
satisfies (a) and (b), then $(a_1,\dots,a_k)$ is a chain, and the
twists are right-handed with respect to some orientation of a regular
neighbourhood of the union of the curves of the chain (see
\cite[Section 4]{SM1}). A sequence of Dehn twists satisfying (a) and
(b) will also be called a chain. Observe that if $(a_1,a_2)$ is a $2$-chain
of two-sided curves, then $S^{a_i}$ must be  connected and
nonorientable for $i=1,2$.

\subsection{Trees.}
We will now define a tree of curves (and Dehn twists) as a
generalization of a chain. Suppose that $\mathcal{C}$ is a
collection of curves, such that $i(a,b)\in\{0,1\}$ for all $a,b\in\mathcal{C}$. Let $\Gamma(\mathcal{C})$ be a graph with
$\mathcal{C}$ as the set of vertices, and where $a$ and $b$ are
connected by an edge if and only if $i(a,b)=1$. We will call
$\mathcal{C}$ a tree if and only if $\Gamma(\mathcal{C})$ is a tree
(connected and acyclic). If all curves in a tree are two-sided,
then a regular neighbourhood of the union of these curves is
orientable. Let $t_a$ be right-handed Dehn twists  with respect to some
orientation of such a neighbourhood for $a\in\mathcal{C}$. Then
\begin{itemize}
\item[(a')] $t_{a}t_{b}t_{a}=t_{b}t_{a}t_{b}$ if $a$ and $b$ are connected by an edge,
\item[(b')] $t_{a}t_{b}=t_{b}t_{a}$ otherwise.
\end{itemize}
Conversely, suppose that $T=\{t_a\colon a\in\mathcal{C}\}$ is a set
of Dehn twists for some set of curves $\mathcal{C}$, where each two
twists of $T$ either commute, or satisfy the braid relation. Then
the geometric intersection number of the underlying curves is,
respectively, either $0$ or $1$. We say that $T$ is a tree of twists
if and only if $\mathcal{C}$ is a tree.
We will always assume that the curves in $\mathcal{C}$ realize their geometric
intersection number and a  regular neighbourhood of the union of these curves is oriented so that all twists of $T$ are right-handed.

The following corollary follows immediately from Corollary
\ref{C:TtoT}
\begin{Corollary}\label{C:TreetoTree}
For $i=1,2$ let $S_i=N_{g_i}^{n_i}$ for $g_i\ge 5$ and assume $\xi(S_1)=\xi(S_2)$. Suppose that
$\varphi\colon\Mcg(S_1)\to\Mcg(S_2)$ is an isomorphism. If
$T=\{t_a\colon a\in\mathcal{C}\}\subset\Mcg(S_1)$  is a tree of Dehn twists of cardinality at least $2$, then
$\varphi(T)$ is also a tree of Dehn twists for some set of curves
$\mathcal{C}'$, such that $\Gamma(\mathcal{C})$ and
$\Gamma(\mathcal{C}')$ are isomorphic (as abstract graphs).
\end{Corollary}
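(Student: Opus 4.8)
The plan is to obtain the statement as a quick consequence of Corollary~\ref{C:TtoT} together with the algebraic description of trees of twists recorded in the ``Trees'' subsection.

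First I would verify that Corollary~\ref{C:TtoT} applies to each $t_a\in T$. Since $T=\{t_a\colon a\in\mathcal{C}\}$ is a tree of twists, every $a\in\mathcal{C}$ is two-sided, and since $|\mathcal{C}|\ge 2$ and $\Gamma(\mathcal{C})$ is connected, each $a$ is joined by an edge to some $b\in\mathcal{C}$. Thus $(a,b)$ is a $2$-chain of two-sided circles, and by the observation at the end of the ``Chains'' subsection $S^a$ is connected and nonorientable; in other words $t_a$ is a Dehn twist about a nonseparating circle with nonorientable complement. Corollary~\ref{C:TtoT} then gives $\varphi(t_a)=t_{a'}$ for some such circle $a'$. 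Set $\mathcal{C}'=\{a'\colon a\in\mathcal{C}\}$ and $T'=\varphi(T)=\{t_{a'}\colon a'\in\mathcal{C}'\}$. Because $\varphi$ is injective and a Dehn twist together with its inverse determines its underlying circle, no two of the classes $\varphi(t_a)$ have the same underlying circle (otherwise some product $\varphi(t_a)\varphi(t_b)=\varphi(t_at_b)$ would be trivial while $t_at_b$ is not), so $a\mapsto a'$ is a bijection $\mathcal{C}\to\mathcal{C}'$.

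Next I would push the defining relations through $\varphi$. For $a,b\in\mathcal{C}$ the twists $t_a,t_b$ either commute or satisfy the braid relation, according to whether $i(a,b)$ is $0$ or $1$; applying the isomorphism $\varphi$ shows that $t_{a'},t_{b'}$ satisfy exactly the same alternative. By the converse part of the ``Trees'' subsection, two Dehn twists that commute have disjoint underlying circles and two satisfying the braid relation have geometric intersection number $1$, so $i(a',b')=i(a,b)\in\{0,1\}$ for all $a,b\in\mathcal{C}$. Hence the bijection $a\mapsto a'$ is an isomorphism of graphs $\Gamma(\mathcal{C})\to\Gamma(\mathcal{C}')$. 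Since $\Gamma(\mathcal{C})$ is a tree, so is $\Gamma(\mathcal{C}')$; therefore $\mathcal{C}'$ is a tree of circles and $T'=\varphi(T)$ is a tree of Dehn twists whose graph is isomorphic to that of $T$, as required.

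The only step requiring any care is the first one: making sure Corollary~\ref{C:TtoT} can be invoked, that is, that \emph{every} circle in a tree of cardinality at least $2$ is nonseparating with nonorientable complement. This is precisely where the hypothesis $|\mathcal{C}|\ge 2$ enters (a single two-sided circle could be separating or have orientable complement), and it uses the $2$-chain observation. Everything after that is a formal consequence of $\varphi$ being an isomorphism and of the purely algebraic characterization of the geometric intersection numbers $0$ and $1$ by the commuting and braid relations.
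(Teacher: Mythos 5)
Your proof is correct and follows the same route as the paper, which states the corollary as an immediate consequence of Corollary~\ref{C:TtoT}: you apply that corollary to each twist (justified by the $2$-chain observation that every circle in a tree of cardinality at least $2$ is nonseparating with nonorientable complement) and then transport the commuting/braid relations through $\varphi$, using the converse statement in the ``Trees'' subsection to recover the intersection pattern. Your spelled-out details (including the injectivity of $a\mapsto a'$) are exactly the steps the paper leaves implicit.
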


\subsection{Useful relations among  Dehn twists.}
The following lemma is well-known (see \cite[Proposition
2.12]{LabPar}).
\begin{Lemma}\label{L:A2k+1}
Suppose that $(t_{c_1},t_{c_2},\dots,t_{c_{2k+1}})$ is a chain of twists. Then
\[(t_{c_1}t_{c_2}\cdots t_{c_{2k+1}})^{2k+2}=t_{u_1}t_{u_2},\]
where $t_{u_1}$, $t_{u_2}$ are right-handed twists about the boundary components
of  a regular neighbourhood of the union of the curves $c_i$ (Figure \ref{F:oddchain}).
\end{Lemma}

\begin{figure}[hbt]
  \begin{center}
    \includegraphics[width=8cm]{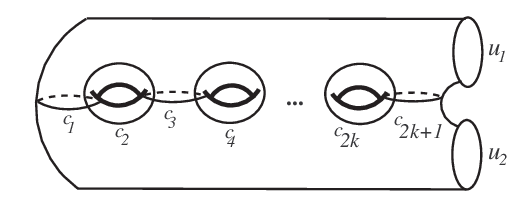}
    \caption{\label{F:oddchain}A chain of two-sided curves of odd length and its regular neighbourhood}
    \label{oddchain}
  \end{center}
\end{figure}

Relations (a) and (b) of the next lemma appear in \cite[Theorem 3.2]{LabPar} as
(R5) and (R6) respectively. Their proof can be deduced from \cite[Proposition 2.12]{LabPar}.

\begin{Lemma}\label{L:D6}
Suppose that  $\{t_{c_0},t_{c_1},\dots,t_{c_7}\}$ is the tree of right-handed
Dehn  twists on $\Sigma_{2,3}$ whose underlying curves are shown on
Figure \ref{3holedsurface}, and $t_{u_i}$,  $i=1,2,3$,  are right-handed
Dehn twists about the boundary components of  $\Sigma_{2,3}$. Then
\begin{align*}
&\mathrm{(a)}\quad t_{u_1}=(t_{c_0}t_{c_1}t_{c_2}t_{c_3}t_{c_4}t_{c_5})^5(t_{c_1}t_{c_2}t_{c_3}t_{c_4}t_{c_5})^{-6}\\
&\mathrm{(b)}\quad t_{u_2}=(t_{c_7}t_{c_6}t_{c_4}t_{c_3}t_{c_2}t_{c_0})^5(t_{c_6}t_{c_4}t_{c_3}t_{c_2}t_{c_0})^{-6}
(t_{c_6}t_{c_5}t_{c_4})^4(t_{c_7}t_{c_6}t_{c_5}t_{c_4})^{-3}
\end{align*}
\end{Lemma}
\begin{figure}[hbt]
  \begin{center}
    \includegraphics[width=7cm]{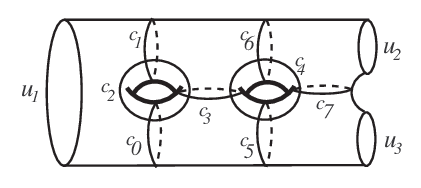}
    \caption{\label{3holedsurface}The curves from Lemma \ref{L:D6}}
  \end{center}
\end{figure}

\subsection{Generators of $\PMcg^+(N_h^n)$.\label{ss:gens}} The aim of this subsection is to fix a
finite generating set of $\PMcg^+(N_h^n)$ for $h\ge 5$. We choose a generating set which differs
slightly from the one given in \cite[Theorem 4.1]{SM2}. We begin its description with Dehn twists.
Let $D$ and $E$ be the trees of curves  from Figures
\ref{tree1odd} and \ref{tree3even}. We will abuse notation and denote by the same
symbols the corresponding  trees of Dehn twists. As we already mentioned in the
introduction, $\PMcg^+(N_h^n)$ is not generated by Dehn twists and to obtain a
generating set for this group we  add to $D$ or $E$ one more generator, namely a
crosscap transposition (in \cite{SM2} a crosscap slide is used). In order to
describe this element, and also to be able to prove Lemmas \ref{L:idonPMcg:odd}
and \ref{L:idonPMcg:even} in Section \ref{OutAut}, we view certain subsurface of $N_h^n$ as a disc with crosscaps.

\begin{figure}[hbt]
  \begin{center}
    \includegraphics[width=8cm]{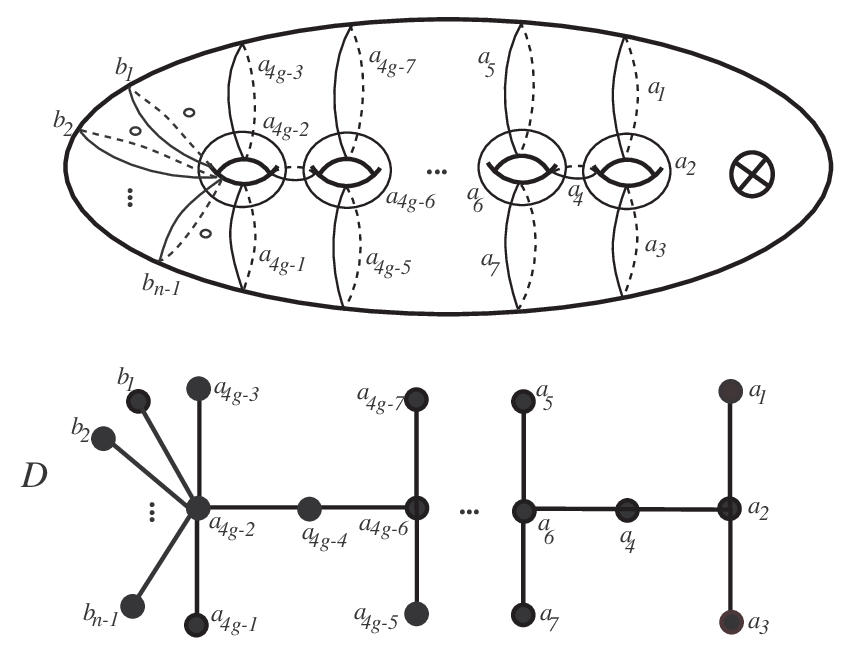}
    \caption{\label{tree1odd}The tree of curves $D$ on $N_{2g+1}^n$.}
  \end{center}
\end{figure}

\begin{figure}[hbt]
  \begin{center}
    \includegraphics[width=8cm]{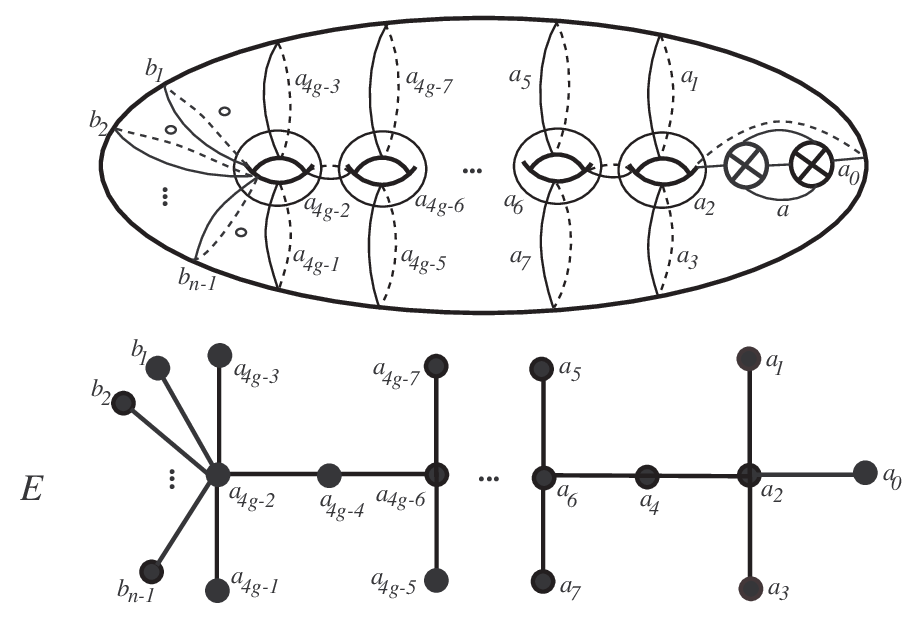}
    \caption{\label{tree3even}The tree of curves $E$ on $N_{2g+2}^n$.}
    \end{center}
\end{figure}

For $k\in\{5,6\}$ let $N_{k,1}$ be a nonorientable surface of genus
$k$ with one boundary component, represented on Figure \ref{gij} as
disc with $k$ crosscaps numbered from $1$ to $k$.
For $i\le j$ let $c_{i,j}$ denote the simple closed curve on
$N_{k,1}$ from Figure \ref{gij}. Note that $c_{i,j}$ is two-sided if and only if $j-i$ is odd. In
such case $t_{c_{i,j}}$ denotes the twist about $c_{i,j}$ in the direction indicated by the  arrows on Figure \ref{gij}.

\begin{figure}[hbt]
 \begin{center}
   \includegraphics{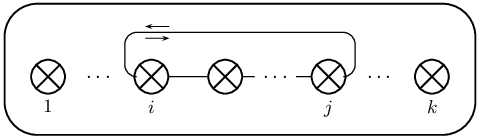}
   \caption{ \label{gij}The surface $N_{k,1}$ and the curve $c_{i,j}$, $k=5$ or $6$.}
 \end{center}
\end{figure}

We denote by $u$ the
\emph{crosscap transposition} defined  to be the isotopy class of
the diffeomorphism of $N_{k,1}$ interchanging the $(k-1)$'st and
$k$'th  crosscaps as shown on Figure \ref{U}, and equal to the
identity outside a disc containing these crosscaps.

\begin{figure}[hbt]
 \begin{center}
   \includegraphics{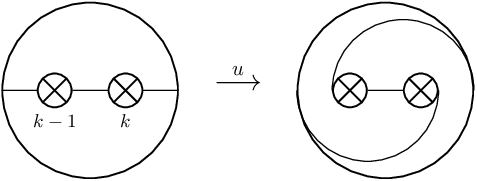}
   \caption{\label{U}The crosscap transposition}
 \end{center}
\end{figure}

\begin{Lemma}\label{L:embNk1} For $g\ge 2$ there are embeddings $\theta_1\colon N_{5,1}\to N_{2g+1}^n$ and
$\theta_2\colon N_{6,1}\to N_{2g+2}^n$, such that:
\begin{itemize}
\item[(a)] for $i=1,2$, $ N_{2g+i}^n\backslash\theta_i(N_{4+i,1})$ is an
orientable surface of genus $g-2$ with $n$ punctures containing the
curves $a_i$ for all $i>8$;
\item[(b)] for $i=1,2$, $a_5=\theta_i(c_{1,2})$, $a_6=\theta_i(c_{2,3})$,
$a_4=\theta_i(c_{3,4})$, $a_2=\theta_i(c_{4,5})$,
$a_1=\theta_i(c_{1,4})$;
\item[(c)] $a_3=\theta_1(t_{c_{4,5}}u^{-1}(c_{1,4}))$;
\item[(d)] $a_0=\theta_2(c_{5,6})$, $a=\theta_2(c_{1,6})$;
\item[(e)] $\theta_2$ maps  boundary curves of a regular neighbourhood
of $c_{1,6}\cup c_{5,6}\cup c_{6,6}$ on $a_1$ and $a_3$.
\end{itemize}
\end{Lemma}
\begin{proof} Suppose $h=2g+1$. Set $c_5=c_{1,2}$, $c_6=c_{2,3}$,
$c_4=c_{3,4}$, $c_2=c_{4,5}$, $c_1=c_{1,4}$ and
$c_3=t_{c_{4,5}}u^{-1}(c_{1,4})$. By changing these curves by a small
isotopy, we may assume that they realize their geometric intersection number.
Then we have $|c_i\cap c_j|=|a_i\cap a_j|$ for all
$i,j\in\{1,\dots,6\}$. Let $M$ (resp. $M'$) be a regular
neighbourhood of the union of $c_i$ (resp. $a_i$) for $i\in\{1,\dots,6\}$. Observe that $M$ and $M'$ are both diffeomorphic to $\Sigma_{2,3}$.
There is a diffeomorphism
$\theta'\colon M\to M'$ such that
$\theta'(c_i)=a_i$ for $i\in\{1,\dots,6\}$.
To see that $\theta'$ can be extended to an
embedding $\theta_1\colon N_{5,1}\to N_{h}^n$ observe that (1) $c_1$, $c_4$
and $c_5$ (resp.    $a_1$, $a_4$ and $a_5$) bound a pair of pants on $N_{5,1}$
(resp. $N_h^n$); (2) $c_3$, $c_4$, $c_5$ and $\partial\!{N_{5,1}}$ bound a $4$-holed
sphere; (3) $c_1$ and $c_3$ (resp. $a_1$ and $a_3$) bound a subsurface of $N_{5,1}$ (resp. $N_h^n$) diffeomorphic to $N_{1,2}$.

Suppose $h=2g+2$. Set $c_5=c_{1,2}$, $c_6=c_{2,3}$, $c_4=c_{3,4}$,
$c_2=c_{4,5}$, $c_1=c_{1,4}$, $c_0=c_{5,6}$. Let $K$ be a regular
neighbourhood of $c_{1,6}\cup c_{5,6}\cup c_{6,6}$. Observe that $K$
is a Klein bottle with two holes, whose one boundary component is
isotopic to $c_1=c_{1,4}$. Let $c_3$ denote the other component of
$\partial\!{K}$. We have $|c_i\cap c_j|=|a_i\cap a_j|$ for all
$i,j\in\{0,\dots,6\}$. Let $M$ (resp. $M'$) be a regular
neighbourhood of the union of $c_i$ (resp. $a_i$) for $i\in\{0,\dots,6\}$. Observe that $M$ and $M'$ are both diffeomorphic to $\Sigma_{2,4}$.
There is a diffeomorphism
$\theta'\colon M\to M'$ such that
$\theta'(c_i)=a_i$ for $i\in\{0,\dots,6\}$. To see that $\theta'$ can be extended to an
embedding $\theta_2\colon N_{6,1}\to N_{h}^n$ observe that (1) $c_1$, $c_4$ and $c_5$
(resp.    $a_1$, $a_4$ and $a_5$) bound a pair of pants on $N_{6,1}$ (resp. $N_h^n$); (2) $c_3$, $c_4$, $c_5$
and $\partial\!{N_{6,1}}$ bound a $4$-holed sphere; (3) two boundary curves of $M$ (resp. $M'$)
bound an annulus with core $c_{1,6}$ (resp. $a$). The conditions  (a, b, d, e) follow immediately from the construction of $\theta_2$.
\end{proof}

Via these embeddings, we will treat $N_{4+i,1}$ as a subsurface of
$N_{2g+i}^n$ for $i=1,2$. Consequently, we will identify curves on
$N_{4+i,1}$ with their images on $N_{2g+i}^n$, and also, using \cite[Corollary 3.8]{Stu_geom}, treat
$\Mcg(N_{4+i,1})$ as a subgroup of $\Mcg(N_{2g+i}^n)$ (in particular $t_{a_5}=t_{c_{1,2}}$ etc.).

\begin{Proposition}\label{P:gensPMCG} For $g\ge 2$,
$\PMcg^+(N_{2g+1}^n)$ (resp. $\PMcg^+(N_{2g+2}^n)$) is generated by
$u$ and $D$ (resp. $u$ and $E$).
\end{Proposition}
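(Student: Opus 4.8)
The plan is to deduce the proposition directly from \cite[Theorem 4.1]{SM2}, which supplies a generating set for $\PMcg^+(N_g^n)$ consisting of finitely many Dehn twists about explicitly prescribed circles, together with a single crosscap slide. The work is then entirely in identifying that generating set with the one in the statement.

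First I would recall the precise form of \cite[Theorem 4.1]{SM2} and match its configuration with ours. By the change of coordinates principle, any two collections of circles (together with a specified Klein-bottle-with-two-holes carrying a crosscap slide) of the same topological type on $N_{2g+i}^n$ differ by a diffeomorphism, and conjugating a generating set by a diffeomorphism produces a generating set; so it suffices to arrange the circles underlying the generators of \cite[Theorem 4.1]{SM2} so that the Dehn-twist generators are precisely the twists of $D$ (resp. $E\cup\{t_a\}$) of Figure~\ref{tree1odd} (resp. Figure~\ref{tree3even}), while the crosscap slide is the one supported in the embedded copy $\theta_i(N_{4+i,1})$ of Lemma~\ref{L:embNk1}, namely $y=t_{c_{k-1,k}}u$ with $k=4+i$. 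Once this identification is in place the conclusion is immediate: by Lemma~\ref{L:embNk1}(b),(d) the circle $c_{k-1,k}$ is $a_2$ when $i=1$ and $a_0$ when $i=2$, so $t_{c_{k-1,k}}$ is one of the twists of $D$ (resp. of $E$). Hence $u=t_{c_{k-1,k}}^{-1}y$ lies in $\langle D\cup\{y\}\rangle$ (resp. $\langle (E\cup\{t_a\})\cup\{y\}\rangle$), and conversely $y=t_{c_{k-1,k}}u$ lies in $\langle D\cup\{u\}\rangle$ (resp. $\langle (E\cup\{t_a\})\cup\{u\}\rangle$), so
\[\langle D\cup\{u\}\rangle=\langle D\cup\{y\}\rangle=\PMcg^+(N_{2g+1}^n),\qquad \langle (E\cup\{t_a\})\cup\{u\}\rangle=\PMcg^+(N_{2g+2}^n).\]

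The main obstacle is the picture-matching in the first step: one must check, piece by piece, that the complementary subsurfaces cut off by the generating configuration of \cite[Theorem 4.1]{SM2} have the same genera and the same distribution of the $n$ punctures, and that the pairwise geometric intersection numbers agree, as for the trees of Figures~\ref{tree1odd} and \ref{tree3even} (with the extra separating circle $a$ in the even case) together with the two adjacent crosscaps of $N_{4+i,1}$ carrying the slide. This is a finite though somewhat tedious verification; after it is done the change of coordinates principle furnishes the required diffeomorphism and the proposition follows.
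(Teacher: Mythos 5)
Your proposal is correct and follows essentially the same route as the paper: the authors also obtain the result as an immediate corollary of \cite[Theorem 4.1]{SM2}, using the embeddings of Lemma \ref{L:embNk1} to identify the generating configuration and observing that the crosscap slide $t_{c_{k-1,k}}u$ differs from $u$ by the twist $t_{a_2}$ (resp. $t_{a_0}$), which already lies in $D$ (resp. $E$). The only difference is that you spell out the change-of-coordinates matching that the paper leaves implicit.
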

\begin{proof}
Let $y=t_{c_{k-1,k}}u$. This element is called crosscap slide and Stukow proved in
\cite[Theorem 4.1]{SM2} that $\PMcg^+(N_{2g+1}^n)$ is generated by $D\cup\{y\}=D\cup\{t_{a_2}u\}$,
whereas $\PMcg^+(N_{2g+2}^n)$ is generated by $E\cup\{y, t_a\}=E\cup\{t_{a_0}u, t_a\}$. It suffices
to show that $t_a$ can be expressed as a product of elements of $E$. This can be achieved by (a) of Lemma \ref{L:D6}:
\[(t_{a_0}t_{a_1}t_{a_2}t_{a_4}t_{a_6}t_{a_5})^5(t_{a_0}t_{a_2}t_{a_4}t_{a_6}t_{a_5})^{-6}=t_a\qedhere\]
\end{proof}
We will also need the following fact about the twist subgroup.
\begin{Lemma}\label{L:gensT}
For $h\ge 5$, $\mathcal{T}(N_h^n)$ is generated by Dehn twists about nonseparating curves with nonorientable complement.
\end{Lemma}
\begin{proof} Set $S=N_h^n$.
By \cite{SM2},  $\mathcal{T}(S)$ is a subgroup of $\PMcg^+(S)$ of index $2$, and
$\PMcg^+(S)=\mathcal{T}(S)\cup u\mathcal{T}(S)$. By Proposition \ref{P:gensPMCG},
$\mathcal{T}(S)$ is generated by $D\cup uDu^{-1}\cup\{u^2\}$ if $h=2g+1$, and by
$E\cup uEu^{-1}\cup\{u^2\}$ if $h=2g+2$. We have $u^2=t_e$, where $e$ is the boundary
curve of the Klein bottle with a hole shown in Figure \ref{U}. Because $D$ and $E$ consist
of Dehn twists about nonseparating curves with nonorientable complement, the same is
true for $uDu^{-1}$ and $uEu^{-1}$, and it suffices to show that $t_e$ can also be
expressed as a product of such twists. Note that $S^e$ is homeomorphic to
$N_{h-2,1}^n\amalg N_{2,1}$. If $h\ge 7$, then the surface $\Sigma_{2,3}$
from Figure \ref{3holedsurface} can be embedded in $S$ so that the boundary
curve $u_1$ of $\Sigma_{2,3}$ coincides with $e$, and then  (a) of Lemma \ref{L:D6}
provides the desired expression of $t_e$ as a product of Dehn twists about nonseparating curves with nonorientable complement.

For the case $h=5,6$ we need the so called star relation, which is a special case
of the fourth relation of \cite[Proposition 2.12]{LabPar}. We say that
curves $c_0, c_1, c_2, c_3$ form a star if $(c_1,c_2,c_3)$ is a chain,
$i(c_0,c_2)=1$ and  $i(c_0,c_1)=i(c_0,c_3)=0$. A regular neighbourhood
of the union of the curves of the star is a 3-holed torus, and we denote
its boundary components by $u_1, u_2, u_3$. The star relation is $(t_{c_0}t_{c_1}t_{c_3}t_{c_2})^3=t_{u_1}t_{u_2}t_{u_3}$,
where the twists are right-handed with respect to some orientation of the
regular neighbourhood. We choose a chain $(c_1,c_2,c_3)$ of curves such
that one of the boundary components of a regular neighbourhood of $c_1\cup c_2\cup c_3$ is
the curve $e$, and we denote the second component by $u_1$. By Lemma \ref{L:A2k+1} we have
$(t_{c_1}t_{c_2}t_{c_3})^4=t_{u_1}t_e$. Note that the connected component of $S^{u_1}$
containing the chain is homeomorphic to $N_{4,1}$ and so we can complete the chain to a
star $(c_0,c_1,c_2,c_3)$, by adding a curve $c_0$, such that one boundary curve of a
regular neighbourhood of the union of the curves of the star is $u_1$ and the other
two components bound M\"obius bands. Then the star relation takes the form  $(t_{c_0}t_{c_1}t_{c_3}t_{c_2})^3=t_{u_1}$ and
$t_e=(t_{c_0}t_{c_1}t_{c_3}t_{c_2})^{-3}(t_{c_1}t_{c_2}t_{c_3})^4$ is the
desired expression of $t_e$ as a product of Dehn twists about nonseparating curves with nonorientable complement.
\end{proof}

\section{Automorphisms of ${\rm {Mod}}(N^n_g)$ }\label{OutAut}
The aim of this section is to prove Theorem \ref{MainThm1}. Our first observation is that we can assume $S_1=S_2$ by the following lemma.
\begin{Lemma}\label{L:diff}
Suppose that $\varphi\colon\Mcg(S_1)\to\Mcg(S_2)$ is an isomorphism, where $S_1$ and
$S_2$ are as in Theorem \ref{MainThm1}. Then $(g_1,n_1)=(g_2,n_2)$.
\end{Lemma}
\begin{proof}
By Lemma \ref{L:gensT} and Corollary \ref{C:TtoT},
$\varphi(\mathcal{T}(S_1))=\mathcal{T}(S_2)$, and hence
$[\Mcg(S_1)\colon\mathcal{T}(S_1)]=[\Mcg(S_2)\colon\mathcal{T}(S_2)]$.
Since $[\Mcg(S_i)\colon\mathcal{T}(S_i)]=2^{n_i+1}n_i!$ by
\cite[Corollary 6.4]{SM2}, we have $n_1=n_2$. This and the equality
$\xi(S_1)=\xi(S_2)$ imply that $g_1$ and $g_2$ must be of the same
parity, and in fact $g_1=g_2$.
\end{proof}
Our next goal is the following  key lemma.
\begin{Lemma}\label{L:almostinner}
Suppose that $h=2g+1$ (resp. $h=2g+2$) for $g\ge 2$ and
$\varphi\colon\Mcg(N_h^n)\to\Mcg(N_h^n)$ is an automorphism. Then
there exists $f\in\Mcg(N_h^n)$ such that $\varphi(t)=ftf^{-1}$ for
each $t\in D$ (resp. $t\in E\cup\{t_a\}$).
\end{Lemma}
After we prove Lemma \ref{L:almostinner}, the next step is to show, using
Proposition \ref{P:gensPMCG}, that the automorphism $\varphi'\colon\Mcg(N_h^n)\to\Mcg(N_h^n)$ defined
as $\varphi'(x)=f^{-1}\varphi(x)f$ restricts to an inner automorphism of $\PMcg^+(N_h^n)$. This step
is completed in Lemmas \ref{L:idonPMcg:odd} and \ref{L:idonPMcg:even}. Finally, we conclude Theorem \ref{MainThm1} by using Lemma \ref{Ivanov1}.

For the proof of Lemma \ref{L:almostinner} we need to compute the centralizers of
sub-trees $\Theta\subset D$ and $\Lambda\subset E$  defined as
\begin{align*}
&\Theta=\{t_{a_1},t_{a_3},t_{a_5}\}\cup\{t_{a_{2i}}\colon 1\le i\le 2g-1\}\cup\{t_{b_j}\colon 1\le j\le n-1\},\\
&\Lambda=\{t_{a_1},t_{a_3},t_{a_5}\}\cup\{t_{a_{2i}}\colon 0\le i\le
2g-1\}\cup\{t_{b_j}\colon 1\le j\le n-1\}.
\end{align*}
Let $\Sigma_{g,n+1}$ (resp.
$\Sigma_{g,n+2}$) be a subsurface of $N_{2g+1}^n$ (resp.
$N_{2g+2}^n$), supporting $D$ (resp. $E$), obtained by removing from
$N_{2g+1}^n$ (resp. $N_{2g+2}^n$) $n$ open discs, each containing
one puncture, and a M\"obius band (resp. an annulus with core $a$).
For $i=1,2$ the inclusion $\Sigma_{g,n+i}\subset N_{2g+i}^n$ induces
a homomorphism $\Mcg(\Sigma_{g,n+i})\to\Mcg( N_{2g+i}^n)$.

\begin{Lemma}\label{L:CTodd}
Suppose that $h=2g+1$ for $g\ge 2$. Then
$C_{\Mcg(N_h^n)}(\Theta)=1$.
\end{Lemma}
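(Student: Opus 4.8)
The plan is to recognise $\langle\Gamma\rangle$ as (a finite-index subgroup of) the image in $\Mcg(N_h^n)$ of the mapping class group of the orientable subsurface $\Sigma_{g,n+1}$, and then to compute the centraliser of that subsurface group.

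First I would check that any $f\in C_{\Mcg(N_h^n)}(\Gamma)$ fixes every circle $c$ underlying $\Gamma$: from $ft_cf^{-1}=t_c$, the conjugation formula $ht_ch^{-1}=t_{h(c)}^{\pm1}$, and the fact recalled from \cite{SM1} that $t_a=t_b$ forces $a$ and $b$ isotopic, one gets $f(c)=c$. Since (as one reads off from Figure \ref{tree1odd}) the circles of $\Gamma$ fill $\Sigma_{g,n+1}$, the class $f$ preserves the isotopy class of this subsurface.

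Next I would use that $\Gamma$, regarded as a family of Dehn twists on the orientable surface $\Sigma_{g,n+1}$, is a generating set of Humphries--Gervais type --- the tree of Figure \ref{tree1odd} being exactly such a configuration --- so that $\langle\Gamma\rangle$ has finite index in the image of $\Mcg(\Sigma_{g,n+1})$ in $\Mcg(N_h^n)$. Since in a mapping class group the centraliser of a nontrivial element coincides with the centraliser of any of its nonzero powers, an element centralising a finite-index subgroup of $\Mcg(\Sigma_{g,n+1})$ centralises all of it; hence $f$ centralises the full subsurface group. Because $\Sigma_{g,n+1}$ has genus $g\ge 2$, it carries pseudo-Anosov classes, and the usual analysis of centralisers of subsurface mapping class groups shows that $f$ preserves $\Sigma_{g,n+1}$, acts on it trivially modulo twists about $\partial\Sigma_{g,n+1}$, and is supported otherwise on the complement $N_h^n\setminus\Sigma_{g,n+1}$ --- the disjoint union of $n$ once-punctured discs and the M\"obius band $B_0$ removed to form $\Sigma_{g,n+1}$. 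As the mapping class group of a once-punctured disc is generated by its (trivial) boundary twist, that of $B_0$ by $t_{\partial B_0}$, and each component of $\partial\Sigma_{g,n+1}$ bounds in $N_h^n$ a once-punctured disc or $B_0$, it follows that $f$ is a power of $t_{\partial B_0}$.

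Finally, $t_{\partial B_0}=1$ in $\Mcg(N_h^n)$: passing to the orientable double cover $\Sigma_{h-1}^{2n}$, in which $\Mcg(N_h^n)$ embeds (Section \ref{Prelim}), the band $B_0$ lifts to an annulus whose two boundary circles are isotopic; since the covering involution is orientation-reversing, the lift of $t_{\partial B_0}$ is a product of two opposite twists about these isotopic circles, hence trivial. Therefore $f=1$, which is Lemma \ref{L:CTodd}. The substantive point in this argument is the generation statement --- that the particular tree $\Gamma$ of Figure \ref{tree1odd} generates a finite-index subgroup of $\Mcg(\Sigma_{g,n+1})$ --- and it is exactly this that dictates the precise shape of $\Gamma$; the only other step requiring care is the triviality of $t_{\partial B_0}$, which is where the hypothesis $g\ge 2$ enters.
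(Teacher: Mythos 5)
Your overall strategy---identify $\langle\Gamma\rangle$ with (a large subgroup of) the image $H$ of $\Mcg(\Sigma_{g,n+1})$ in $\Mcg(N_h^n)$ and compute $C_{\Mcg(N_h^n)}(H)$---is the same as the paper's, and your closing observation that the twist about the boundary of the removed M\"obius band is trivial is correct (it is consistent with the paper's notion of trivial circles, and your double-cover argument for it works). But the two middle steps, where the real work lies, are not justified. First, your bridge from ``$f$ centralises a finite-index subgroup of $H$'' to ``$f$ centralises $H$'' rests on the claim that in a mapping class group the centraliser of an element coincides with the centraliser of any nonzero power; that is false in general: for a finite-order $g\in H$ the power of $g$ that lands in $\langle\Gamma\rangle$ may be trivial and gives no constraint, and for a reducible class $g$ interchanging two disjoint curves $a,b$ with $g^2=t_at_b$ one has $t_a\in C(g^2)\setminus C(g)$. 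The paper avoids this issue entirely by quoting the stronger fact, deduced from \cite{LabPar}, that the twists of $\Gamma$ generate $H$ exactly (not just up to finite index), so you would need that exact generation statement, or else rerun your argument using only elements of $\langle\Gamma\rangle$.

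Second, and more seriously, the sentence ``the usual analysis of centralisers of subsurface mapping class groups shows that $f$ \dots is supported on the complement up to boundary twists'' is precisely the content of the lemma, and it is not an off-the-shelf citation when the ambient surface is nonorientable: \cite{PR} treats orientable ambient surfaces, and \cite{Stu_geom} is used in this paper only to embed subsurface mapping class groups, not to compute their centralisers. The paper supplies exactly this missing computation by hand: it extracts from $D$ a subfamily $D'$ whose curves form a separating P-S decomposition of $N_h^n$, shows via the proof of Lemma \ref{L:PSdecomp1}(b) that any element centralising $D'$ is a product $\prod t_{a_i}^{m_i}$ over the decomposition curves, and then kills each exponent $m_i$ using \cite[Proposition 3.4]{PR}, which provides a curve $c\subset\Sigma_{g,n+1}$ with $i(c,a_i)>0$ whose twist lies in $H$ and commutes with all twists of $D'\backslash\{t_{a_i}\}$. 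To keep your outline you would have to either prove the centraliser statement for geometric subgroups of nonorientable ambient surfaces (essentially redoing the paper's P-S argument) or point to a reference that genuinely covers this case; as written, the proposal assumes the lemma's main content.
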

\begin{proof}
Let $H$ denote the image of $\Mcg(\Sigma_{g,n+1})$ in $\Mcg(N_h^n)$.
It can be easily deduced from the main result of \cite{LabPar} that
$H$ is generated by twists of $\Theta$.  Thus
$C_{\Mcg(N_h^n)}(\Theta)=C_{\Mcg(N_h^n)}(H)$. Set
$D'=D\backslash\{t_{a_{4i-2}}\colon 1\le i\le g\}$. The curves
supporting the twists of $D'$ form a separating pants and skirts
decomposition of $N_h^n$ (see Subsection \ref{SubS:P-S} for the definition). Let
$h\in C_{\Mcg(N_h^n)}(H)$. Since $D'\subset H$, $h\in
C_{\Mcg(N_h^n)}(D')$. By the proof of (b) of Lemma \ref{L:PSdecomp1},
$h=\prod t_{a_i}^{m_i}$ for some integers $m_i$, where the product
is taken over all $t_{a_i}\in{D'}$. By \cite[Proposition 3.4]{PR},
for every $t_{a_i}\in D'$ there exists a simple closed curve $c$  on
$\Sigma_{g,n+1}$, such that $i(c,a_i)>0$ and  $t_c$ commutes with
all twists in $D'\backslash\{t_{a_i}\}.$ Since $t_c\in H$, it also
commutes with $h$. It follows that $t_c$ commutes with $t_{a_i}^{m_i}$, which is possible only for $m_i=0$, hence $h=1$ and $C_{\Mcg(N_h^n)}(H)$ is
trivial.
\end{proof}

\begin{Lemma}\label{L:CTeven}
Suppose that $h=2g+2$ for $g\ge 2$. Then $C_{\Mcg(N_h^n)}(\Lambda)$
is the infinite cyclic group generated by $t_a$, where $a$ is the curve from Figure \ref{tree3even}.
\end{Lemma}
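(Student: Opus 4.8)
The plan is to mimic the proof of Lemma~\ref{L:CTodd} as closely as possible, with one extra wrinkle: in the odd-genus case the complement of the tree $D$ (after deleting an annular neighbourhood) was a surface $\Sigma_{g,n+1}$ whose mapping class group sat inside $\Mcg(N_h^n)$ with trivial centralizer, whereas here the subsurface $\Sigma_{g,n+2}$ supporting $E$ is obtained by removing an annulus with core $a$, so the circle $a$ itself lies in the complement and commutes with everything in $\Lambda$; I will have to show that $t_a$ is \emph{all} that commutes. First I would let $H$ denote the image of $\Mcg(\Sigma_{g,n+2})$ in $\Mcg(N_h^n)$ and observe, again via \cite{LabPar} (or a direct generating-set computation), that $H$ is generated by the twists of $\Lambda$, so $C_{\Mcg(N_h^n)}(\Lambda)=C_{\Mcg(N_h^n)}(H)$; since $a$ is disjoint from every curve of $\Lambda$ and from $\Sigma_{g,n+2}$, the twist $t_a$ centralizes $H$, giving the inclusion $\langle t_a\rangle\subseteq C_{\Mcg(N_h^n)}(H)$, and $t_a$ has infinite order.

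For the reverse inclusion I would again extract from $\Lambda$ (or from $E$) a separating pants-and-skirts decomposition $\mathcal{C}'$ of $N_h^n$, this time making sure the circle $a$ is one of its curves (in the even-genus case $a$ is exactly the two-sided curve whose complement is orientable, and $N_h^{n\,a}$ decomposes into pantalons of types 2 and 3 plus skirts of type 2, together with the $n$ punctured discs). Given $h'\in C_{\Mcg(N_h^n)}(\Lambda)=C_{\Mcg(N_h^n)}(H)$, the argument in the proof of (b) of Lemma~\ref{L:PSdecomp1} shows $h'$ fixes each curve of $\mathcal{C}'$, preserves each complementary component, and hence equals a product $\prod t_{a_i}^{m_i}$ of powers of twists about the curves of $\mathcal{C}'$. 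Then I would invoke \cite[Proposition~3.4]{PR} exactly as in Lemma~\ref{L:CTodd}: for every curve $a_i\in\mathcal{C}'$ \emph{other than} $a$ that supports a twist in $H$, there is a curve $c$ on $\Sigma_{g,n+2}$ with $i(c,a_i)>0$ and $t_c$ commuting with all the other twists in the decomposition; since $t_c\in H$ it commutes with $h'$, forcing the exponent $m_i=0$. The only curve not killed this way is $a$ itself, because $a$ is not contained in $\Sigma_{g,n+2}$, so no such $t_c$ in $H$ can detect it. Hence $h'=t_a^{m}$ for some $m$, i.e. $C_{\Mcg(N_h^n)}(\Lambda)=\langle t_a\rangle\cong\mathbb{Z}$.

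The main obstacle I anticipate is the bookkeeping needed to be sure that after deleting $a$ from the pants-and-skirts decomposition coming from $E$, every remaining curve $a_i$ still has the ``separating curve'' property used by \cite[Proposition~3.4]{PR} and that the witnessing twist $t_c$ genuinely lies in $H$ (i.e. $c$ is supported in $\Sigma_{g,n+2}$ and commutes with all of $\Lambda\setminus\{t_{a_i}\}$); the case analysis is slightly more delicate than in the odd case because of the extra curve $a_0$ in $\Lambda$ and because the component of $N_h^{n\,\mathcal{C}'}$ adjacent to $a$ must be handled with care to confirm that its only residual symmetry centralizing the boundary twists is a power of $t_a$. A secondary point to check is the claim that $H$ is generated by the twists of $\Lambda$ — for a subsurface of genus $g$ with $n+2$ boundary/puncture data one uses the finite presentation of $\Mcg(\Sigma_{g,n+2})$ from \cite{LabPar} and verifies the chosen chain/tree twists suffice; this is routine but must be stated. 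Everything else is a transcription of the odd-genus argument.
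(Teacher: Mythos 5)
Your outline coincides with the paper's proof: the same subgroup $H$ (the image of $\Mcg(\Sigma_{g,n+2})$, generated by the twists of $\Lambda$), the same kind of separating P-S decomposition containing $a$ (the paper takes the curves supporting $E'=E\cup\{t_a\}\setminus\{t_{a_{4i-2}}\colon 1\le i\le g\}$), and the same appeal to \cite[Proposition 3.4]{PR} to annihilate every exponent except the one on $t_a$. However, the point you yourself flag as delicate --- controlling the piece adjacent to $a$ --- is a genuine gap as written, and the paper closes it with one observation you are missing: $t_a\in H$, because $a$ is isotopic to a boundary component of $\Sigma_{g,n+2}$. You only establish $t_a\in C_{\Mcg(N_h^n)}(H)$, and two elements of a centralizer need not commute with each other; so from $h'\in C_{\Mcg(N_h^n)}(H)$ alone you cannot run the argument of (b) of Lemma \ref{L:PSdecomp1} for the curve $a$: nothing yet guarantees that $h'$ fixes $a$, and the component of the surface cut along the remaining curves that contains $a$ is a four-holed sphere (the Klein bottle with two holes bounded by $a_1$ and $a_3$, cut along $a_0$), whose mapping class group is far larger than the group generated by its boundary twists, so commuting with those twists does not force the restriction of $h'$ there to be a power of $t_a$. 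Once you record that $t_a\in H$, every $h'\in C_{\Mcg(N_h^n)}(H)$ commutes with $t_a$ as well as with the twists about all other curves of the decomposition, the Lemma \ref{L:PSdecomp1}(b) argument applies to the full decomposition including $a$, and the rest of your argument (including the \cite{PR} step, which indeed cannot and need not detect $a$) goes through exactly as in Lemma \ref{L:CTodd}. A cosmetic remark: since $a$ belongs to the decomposition and $N_h^n$ cut along $a$ is orientable, all complementary pieces are pantalons; no skirts occur, contrary to your parenthetical description.
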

\begin{proof}
Let $H$ denote the image of $\Mcg(\Sigma_{g,n+2})$ in $\Mcg(N_h^n)$.
Similarly as in the odd genus case, $H$ is generated by twists of
$\Lambda$, thus $C_{\Mcg(N_h^n)}(\Lambda)=C_{\Mcg(N_h^n)}(H)$. Note
that $t_a\in H$, because $a$ is isotopic to a boundary component of
$\Sigma_{g,n+2}$. Set $E'=E\cup\{t_a\}\backslash\{t_{a_{4i-2}}\colon
1\le i\le g\}$. The curves supporting the twists of $E'$ form a
separating P-S decomposition of $N_h^n$.  Let $h\in
C_{\Mcg(N_h^n)}(H)$. By a similar argument as in the proof of (b) of Lemma \ref{L:PSdecomp1},
$h=t_a^m\prod t_{a_i}^{m_i}$ for some integers $m_i$ and $m$, where
the product is taken over all $t_{a_i}\in E'\backslash\{t_a\}$. By
the same argument as in the proof for odd genus, all $m_i=0$, hence
$h=t_a^m$.
\end{proof}

\begin{Lemma}\label{L:noniso_centr}
Let $S=N_{2g+1}^n$ for $g\ge 2$, $n\ge 1$ and suppose
that $\varphi\colon\Mcg(S)\to\Mcg(S)$ is an isomorphism such that $\varphi(t_{a_1})$ and
$\varphi(t_{a_3})$ are Dehn twists about curves $c_1$ and $c_3$. Then $c_1\cup c_3$ does not bound a once-punctured annulus embedded in $S$.
\end{Lemma}
\begin{proof}
Suppose that  $c_1$ and $c_3$ are the boundary curves of a once-punctured annulus embedded in $S$.
Set  $G=C_{\Mcg(S)}\{t_{a_1},t_{a_3}\}$ and $H=\varphi(G)=C_{\Mcg(S)}\{t_{c_1},t_{c_3}\}$. Observe that
$S^{\{c_1,c_3\}}$ is homeomorphic to $N_{2g-1,2}^{n-1}\amalg\Sigma_{0,2}^1$, whereas
$S^{\{a_1,a_3\}}$ is homeomorphic to $\Sigma_{g-1,2}^{n}\amalg N_{1,2}$.
Let $X$ (resp. $Y$) be the subsurface of $S$ homeomorphic to
$\Sigma_{g-1,2}^n$ (resp. $N_{2g-1,2}^{n-1}$) such that
$\partial\!{X}=a_1\cup a_3$ (resp. $\partial\!{Y}=c_1\cup c_3$). The
centralizer $G$ consists of the isotopy classes of diffeomorphisms
of $S$  fixing $a_1$ and $a_3$ whose restriction to $X$ is
orientation preserving. It follows that the inclusion of $X$ in $S$
induces an isomorphism $\Mcg(X)\to G$ (see \cite[\S 5.2]{PSz} or
\cite[\S 4]{Szep_Osaka}). Similarly, the inclusion of $Y$ in $S$
induces an isomorphism $\Mcg(Y)\to H$. Let $K$ denote the image of
$\PMcg(X)$ in $G$. Because $\PMcg(X)$ is generated by Dehn twists
about nonsepataing curves (see \cite[Proposition 2.10]{LabPar}), $K$
is generated by Dehn twists about nonseparating curves with
nonorientable complement. By Corollary \ref{C:TtoT}, $\varphi(K)$ is
also generated by Dehn twists, and hence it is contained in the
image of $\mathcal{T}(Y)$. By \cite[Corollary 6.4]{SM2},
$\mathcal{T}(Y)$ has index $2^n(n-1)!$ in $\Mcg(Y)$, and hence
$[H\colon\varphi(K)]\ge 2^n(n-1)!$. On the other hand
$[H\colon\varphi(K)]=[G\colon K]=[\Mcg(X)\colon\PMcg(X)]=n!$. This
is a contradiction, because $n!<2^n(n-1)!$.
\end{proof}

\begin{proof}[Proof of Lemma \ref{L:almostinner}.] Set $S=N_h^n$.
Suppose $h=2g+1$. By Corollary \ref{C:TreetoTree}, $\varphi(\Theta)$
is a tree of Dehn twists for which the underlying tree of curves is
isomorphic (as abstract graphs) to that of $\Theta$. For
$t_{a_i},t_{b_j}\in\Theta$ choose curves $c_i$, $d_j$ such that
$t_{c_i}=\varphi(t_{a_i})$, $t_{d_j}=\varphi(t_{b_j})$. These
curves  may be chosen to realize their geometric intersection number.

Let $M$ be a closed regular neighbourhood of the union of $c_i$ and
$d_j$ for $t_{c_i}, t_{d_j}\in\varphi(\Theta)$. Note that $M$ is an
orientable surface of genus $g$ with $n+2$ (or $3$ if $n=0$)
boundary components.

\begin{figure}[hbt]
  \begin{center}
    \includegraphics[width=7cm]{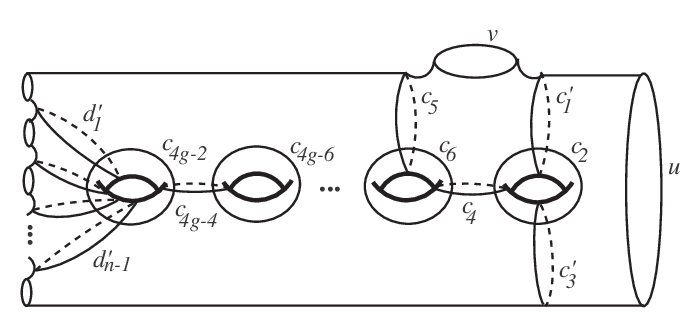}
    \caption{The neighbourhood $M$ supporting $\varphi(\Theta)$.}
    \label{Modd}
  \end{center}
\end{figure}

Similarly, let $M'$ be a closed regular
neighbourhood of the union of the curves supporting $\Theta$. Orient
$M$ and $M'$ so that $t_{a_i}$, $t_{b_j}$  and $t_{c_i}$, $t_{d_j}$
are right-handed Dehn twists. Fix an orientation preserving diffeomorphism
$f_0\colon M'\to M$ such that $f_0(a_{2i})=c_{2i}$ for $1\le 1\le
2g-1$ , $f_0(a_5)=c_5$, $\{f_0(a_1),f_0(a_3)\}=\{c_1,c_3\}$ and
$\{f_0(b_j)\colon 1\le j\le n-1\}=\{d_j\colon 1\le j\le n-1\}$. If
$(g,n)=(2,0)$ then we can also assume $f_0(a_i)=c_i$ for $i=1,3$.
Set $c_i'=f_0(a_i)$ for $i=1,3$ and $d_j'=f_0(b_j)$ for $1\le j\le
n-1$. Either $(c_1',c_3')=(c_1,c_3)$ or  $(c_1',c_3')=(c_3,c_1)$.
Analogously, $(d_1',\dots,d_{n-1}')$ is some (possibly nontrivial)
permutation of $(d_1,\dots,d_{n-1})$. The neighbourhood $M$ and the
curves supporting $\varphi(\Theta)$  are shown on Figure \ref{Modd}.

By  Lemma \ref{L:CTodd},
$C_{\Mcg(S)}(\varphi(\Theta))=\varphi(C_{\Mcg(S)}(\Theta))=1$.
It follows that Dehn twists about the boundary components of $M$ are
trivial, hence each component of $\partial\!{M}$ bounds either a
M\"obius band or a disc with $0$ or $1$ puncture. It is clear that
exactly $1$ component bounds a M\"obius strip, and exactly $n$
components bound once-punctured discs.

Consider the component $u$ of $\partial\!{M}$ which bounds a pair of
pants together with $c'_1$ and $c'_3$.
By Lemma \ref{L:noniso_centr}, $c'_1\cup c'_3$
can not bound a once-punctured annulus in $S$.
It also can not bound a non-punctured annulus, because $t_{c_1}\ne t_{c_3}^{\pm 1}$.
It follows that $u$ bounds a
M\"obius strip.

Suppose $(g,n)\ne (2,0)$ and consider the component $v$ of
$\partial\!{M}$ which bounds a $4$-holed sphere together with $c_5$,
$c_4$ and $c'_1$. For $i=1,3$ set
\[x_i=(t_{a_5}t_{a_6}t_{a_4}t_{a_2}t_{a_i})^6\quad\textrm{and}\quad
y_i=(t_{c_5}t_{c_6}t_{c_4}t_{c_2}t_{c'_i})^6.\] Suppose that
$(c_1',c_3')=(c_3,c_1)$. Then $\varphi(x_3)=y_1$. By Lemma
\ref{L:A2k+1}, $x_3$ is a product of 2 twists commuting with
$t_{a_1}$, whereas $y_1$ does not commute with $t_{c'_3}$, a
contradiction. Hence $c'_i=c_i$ for $i=1,3$. It also follows that
$y_3$ commutes with $t_{c_1}$, which implies that $v$ bounds a
non-punctured disc.

It is now clear that  $f_0$ can be extended to $f\colon S\to
S$. We have $\varphi(t_{a_i})=ft_{a_i}f^{-1}$ for all
$t_{a_i}\in\Theta$. Since each $t_{a_j}\in D$ can be expressed in
terms of $t_{a_i}\in\Theta$, we have
$\varphi(t_{a_j})=ft_{a_j}f^{-1}$ for all $t_{a_j}\in D$. It remains
to prove that $d_i'=d_i$ for $1\le i\le n-1$. We proceed by
induction.

Consider the once-punctured annulus $A_1$, whose boundary is the
union of $b_1$ and $a_{4g-3}$. Let $u_1$ be the boundary of a small
disc contained in $A_1$ and containing the puncture.
By (a) of Lemma \ref{L:D6} we have
\[t_{u_1}=(t_{a_{4g-3}}t_{b_1}t_{a_{4g-2}}t_{a_{4g-4}}t_{a_{4g-6}}t_{a_{4g-7}})^5(t_{b_1}t_{a_{4g-2}}t_{a_{4g-4}}t_{a_{4g-6}}t_{a_{4g-7}})^{-6}.\]
By applying $\varphi$ to the above equality and using Lemma \ref{L:D6} we obtain that
 $\varphi(t_{u_1})$ is
equal to a twist about the curve bounding a disc containing all
punctures of the annulus $A_1'$, whose boundary is the union of
$d_1$ and $f(a_{4g-3})$. Since $t_{u_1}=1$, we have
$\varphi(t_{u_1})=1$. It follows that  $A'_1$ contains only $1$
puncture, hence $d_1=d_1'$.

Now suppose that  $d_i'=d_i$ for $1\le i\le k-1$ for some $k<n$.
Consider the once-punctured annulus $A_k$, whose boundary is the
union of $b_{k-1}$ and $b_k$. Let $u_k$ be the boundary of a small
disc contained in $A_k$ and containing the puncture.
By (b) of Lemma \ref{L:D6} we can express $t_{u_k}$ in terms of Dehn twits
of the tree \[\{t_{b_k}, t_{b_{k-1}}, t_{a_{4g-3}}, t_{a_{4g-2}}, t_{a_{4g-4}}, t_{a_{4g-6}},  t_{a_{4g-7}}\}.\]
By applying $\varphi$ to that expression and using Lemma \ref{L:D6} we
obtain that $\varphi(t_{u_k})$ is equal to a twist about the curve bounding a
disc containing all punctures of the annulus $A_k'$, whose boundary
is the union of $d_k$ and $d_{k-1}$. As above, it follows that
$d_k=d_k'$.

\medskip

For $h=2g+2$ we proceed as above, to obtain a diffeomorphism
$f_0\colon M'\to M$, where $M$ (resp. $M'$) is a regular
neighbourhood of the union of the curves supporting
$\varphi(\Lambda)$ (resp. $\Lambda$), such that $f_0(a_{2i})=c_{2i}$
for $1\le 1\le 2g-1$, $f_0(a_5)=c_5$,
$\{f_0(a_0),f_0(a_1),f_0(a_3)\}=\{c_0,c_1,c_3\}$ and
$\{f_0(b_j)\colon 1\le j\le n-1\}=\{d_j\colon 1\le j\le n-1\}$,
where $\varphi(t_{a_i})=t_{c_i}$ and $\varphi(t_{b_j})=t_{d_j}$. Set
$c_i'=f_0(a_i)$ for $i=0,1,3$ and $d_j'=f_0(b_j)$ for $1\le j\le
n-1$. Note that $M$ and $M'$ are orientable of genus $g$ with $n+3$
(or $4$ if $n=0$) boundary components (see Figure \ref{Meven}).

\begin{figure}[hbt]
  \begin{center}
    \includegraphics[width=7cm]{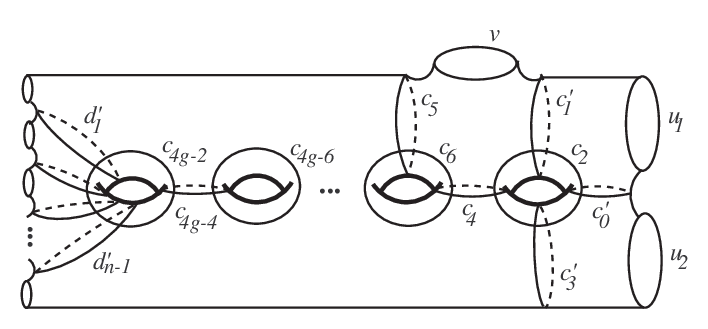}
    \caption{The neighbourhood $M$ supporting $\varphi(\Lambda)$.}
    \label{Meven}
  \end{center}
\end{figure}

For $i\in\{0,1,3\}$ set
\[x_i=(t_{a_5}t_{a_6}t_{a_4}t_{a_2}t_{a_i})^6\quad\textrm{and}\quad
y_i=(t_{c_5}t_{c_6}t_{c_4}t_{c_2}t_{c'_i})^6.\] Suppose
$(g,n)\ne(2,0)$ and consider the component $v$ of $\partial\!{M}$
which bounds a $4$-holed sphere together with $c_5$, $c_4$ and
$c'_1$. It follows from Lemma \ref{L:A2k+1} that
\[\{t_{a_0},t_{a_1},t_{a_3}\}\cap C_{\Mcg(N_h^n)}\{x_0,x_1,x_3\}=\{t_{a_1}\},
\]
hence \[\{t_{c_0},t_{c_1},t_{c_3}\}\cap
C_{\Mcg(N_h^n)}\{y_0,y_1,y_3\}=\{t_{c_1}\}.\] Since neither
$t_{c'_0}$ nor $t_{c'_3}$ commute with $y_1$, we have $c_1=c_1'$.
Furthermore, since $t_{c_1}$ commutes with $y_0$ and $y_3$, $v$
bounds a non-punctured disc.
(If $(g,n)=(2,0)$ then $\{t_{a_0},t_{a_1},t_{a_3}\}\cap
C_{\Mcg(N_h^n)}\{x_0,x_1,x_3\}=\{t_{a_1}, t_{a_3}\}$. It follows
that $c'_0=c_0$, and by composing $f_0$ by a suitable diffeomorphism
if necessary we may assume $c_i'=c_i$ for $i=1,3$. )

By (a) of Lemma \ref{L:D6} we have
\[(t_{a_0}t_{a_1}t_{a_2}t_{a_4}t_{a_6}t_{a_5})^5(t_{a_0}t_{a_2}t_{a_4}t_{a_6}t_{a_5})^{-6}=t_a\]
For $i=0,3$ set
\[z_i=(t_{c'_i}t_{c_1}t_{c_2}t_{c_4}t_{c_6}t_{c_5})^5(t_{c'_i}t_{c_2}t_{c_4}t_{c_6}t_{c_5})^{-6}
\]
Then either $\varphi(t_a)=z_0$ or $\varphi(t_a)=z_3$.
Since $t_a$ commutes with  $t_{a_3}$,
and $z_3$ does not commute with $t_{c'_0}$, we have $\varphi(t_a)=z_0$.
It follows that $c_i'=c_i$ for $i\in\{0,1,3\}$.
Note that $z_0=t_{u_1}$, where $u_1$ is the component of
 $\partial\!{M}$ bounding a pair of pants with $c_0$ and $c_1$.
Let $u_2$ be the the component of
$\partial\!{M}$ bounding a pair of pants with $c_0$ and $c_3$.
By (b) of Lemma \ref{L:D6} we have
\[t_{u_2}=(t_{c_0}t_{c_3}t_{c_2}t_{c_4}t_{c_6}t_{c_5})^5(t_{c_3}t_{c_2}t_{c_4}t_{c_6}t_{c_5})^{-6}
(t_{c_1}t_{c_3}t_{c_2})^4(t_{c_0}t_{c_1}t_{c_3}t_{c_2})^{-3}\]
By applying $\varphi^{-1}$ and using (b) of Lemma \ref{L:D6} again we obtain
\begin{align*}
\varphi^{-1}(t_{u_2})&=(t_{a_0}t_{a_3}t_{a_2}t_{a_4}t_{a_6}t_{a_5})^5(t_{a_3}t_{a_2}t_{a_4}t_{a_6}t_{a_5})^{-6}
(t_{a_1}t_{a_3}t_{a_2})^4(t_{a_0}t_{a_1}t_{a_3}t_{a_2})^{-3}\\
&=t_a^{-1}.
\end{align*}
By Lemma \ref{L:CTeven}
$C_{\Mcg(S)}(\varphi(\Lambda))=\varphi(C_{\Mcg(S)}(\Lambda))$ is the infinite
cyclic group generated by $\varphi(t_a)=t_{u_1}=t_{u_2}^{-1}$.
It follows that $u_1\cup u_2$ bounds an annulus (exterior to $M$) such that the union of $M$
and that annulus is a nonorientable surface of genus $2g+2=h$.

It is clear that $f_0$ can be extended to $f\colon S\to S$.
The rest of the proof follows as in the odd genus case.
\end{proof}

\begin{Lemma}\label{L:centr12}
Let $h=2g+1$ for $g\ge 2$, $D'=D\backslash\{t_{a_i}\colon
i=1,2,3,4\}$ and $H=C_{\Mcg(N_h^n)}(D')$. Let $c$ be the
nontrivial boundary component of a regular neighbourhood of the
union of the curves supporting $D'$. Then $C_H\{t_{a_1},t_{a_2}\}$
is the free abelian group of rank $2$ generated by
$(t_{a_1}t_{a_2})^3$ and either $t_c$ if $(g,n)\ne(2,0)$, or
$(t_{a_5}t_{a_6})^3$ if $(g,n)=(2,0)$.
\end{Lemma}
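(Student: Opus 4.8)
The plan is to identify $C_H\{t_{a_1},t_{a_2}\}$, which is the centralizer in $\Mcg(N_h^n)$ of the subgroup generated by $D'$ together with $t_{a_1}$ and $t_{a_2}$. First I would record two facts about the tree $D$: in $D$ the circle $a_1$ is joined only to $a_2$, and $a_2$ only to $a_1,a_3,a_4$, so that $a_1$ and $a_2$ are disjoint from every circle supporting $D'$ and hence $t_{a_1},t_{a_2}\in H$; whereas $a_4$ meets $a_6$, which does support a twist of $D'$. Let $\Sigma$ be a regular neighbourhood of the union of the circles supporting $D'$, so $c=\partial\Sigma$ is its unique nontrivial boundary circle (when $(g,n)=(2,0)$ one has $D'=\{t_{a_5},t_{a_6}\}$, and $\Sigma$ is then a regular neighbourhood of $a_5\cup a_6$); let $A$ be a regular neighbourhood of $a_1\cup a_2$. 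Since $a_1,a_2$ are disjoint from the circles of $D'$ we may take $\Sigma$ and $A$ with disjoint closures, and $A\cong\Sigma_{1,1}$ because $(a_1,a_2)$ is a $2$-chain of two-sided circles. As in the proofs of Lemmas \ref{L:CTodd} and \ref{L:CTeven}, the main result of \cite{LabPar} gives that the subgroup of $\Mcg(N_h^n)$ generated by $D'$, resp.\ by $t_{a_1},t_{a_2}$, is the image of $\Mcg(\Sigma)$, resp.\ of $\Mcg(A)$; hence $C_H\{t_{a_1},t_{a_2}\}$ is the centralizer of the image of $\Mcg(\Sigma\sqcup A)$.

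Arguing as in the proofs of those two lemmas (combining the proof of part (b) of Lemma \ref{L:PSdecomp1} with the description of geometric subgroups of $\Mcg(N_h^n)$, after completing the pairwise disjoint circles among the supports of $D'$ to a P-S decomposition), this centralizer is the product of the group generated by the twists about the nontrivial boundary circles of $\Sigma\sqcup A$ -- namely $t_c$ and $t_d$ with $d=\partial A$ -- with the image of $\Mcg(P,\partial P)$, where $P=\overline{N_h^n\setminus(\Sigma\sqcup A)}$. It thus remains to analyse $P$. Using that $D$ fills the orientable subsurface $\Sigma_{g,n+1}\subset N_h^n$ whose complement consists of a M\"obius band and $n$ once-punctured discs (the construction preceding Lemma \ref{L:almostinner}), and that $D'=D\setminus\{t_{a_1},t_{a_2},t_{a_3},t_{a_4}\}$, I would check that, apart from discs, once-punctured discs, M\"obius bands and annuli, $P$ has exactly one further component: the circles $a_1,a_2,a_3$ (but not $a_4$) lie in the complement of $\Sigma$ and form a $3$-chain, a regular neighbourhood of $a_1\cup a_2\cup a_3$ together with the M\"obius band is a copy of $N_{3,1}$ with boundary $c$, and removing the handle $A$ from it leaves a copy of $N_{1,2}$ with boundary circles $c$ and $d$.

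Finally I would use $\Mcg(N_{1,2},\partial N_{1,2})\cong\mathbb{Z}^2$, generated by the two boundary twists $t_c,t_d$ -- this follows by capping the boundary circles with discs and using $\Mcg(N_1)=1$, or from the classification of mapping classes of small nonorientable surfaces. As the twist about the curve bounding the M\"obius band inside this $N_{1,2}$ is trivial in $\Mcg(N_h^n)$, the image of $\Mcg(P,\partial P)$ equals $\langle t_c,t_d\rangle$, which is free abelian of rank $2$ because $c$ and $d$ are disjoint, nontrivial and non-isotopic. Hence $C_H\{t_{a_1},t_{a_2}\}=\langle t_c,t_d\rangle$, and the chain relation for a $2$-chain of two-sided circles (cf.\ the discussion of chains above and \cite[Proposition 2.12]{LabPar}) identifies $(t_{a_1}t_{a_2})^3$ with $t_d$ and, when $(g,n)=(2,0)$, $(t_{a_5}t_{a_6})^3$ with $t_c$, giving the generators in the statement. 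The step I expect to cost the most work is the analysis of $P$: one must verify, from the explicit picture of $D$ inside $N_h^n$, that after deleting regular neighbourhoods of the circles of $D'$ and of $a_1\cup a_2$ the only ``large'' complementary piece is this $N_{1,2}$, all others being discs, once-punctured discs, M\"obius bands or annuli whose boundary twists vanish in $\Mcg(N_h^n)$.
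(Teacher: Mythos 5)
Your overall strategy (cut $N_h^n$ along $c$ and $d=\partial A$ and analyse the complementary pieces, one of which is an $N_{1,2}$) is the same as the paper's, and your identification of the pieces is essentially right. But the centralizer computation contains a genuine error. You reduce $C_H\{t_{a_1},t_{a_2}\}$ to ``boundary twists of $\Sigma\sqcup A$ times the image of $\Mcg(P,\partial P)$'' and conclude it equals $\langle t_c,t_d\rangle$; this misses precisely the elements that force the statement to be what it is. The subgroup $\langle t_{a_1},t_{a_2}\rangle$ is a copy of the braid group $B_3=\Mcg(A)$, whose center is generated by $(t_{a_1}t_{a_2})^3$, a \emph{square root} of the boundary twist $t_d$ and not a product of boundary twists. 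This element is supported in $A$, hence commutes with all of $D'$, and it is central in $\langle t_{a_1},t_{a_2}\rangle$, so it visibly lies in $C_H\{t_{a_1},t_{a_2}\}$; but it does not lie in $\langle t_c,t_d\rangle$ (the inclusion $\Mcg(A)\to\Mcg(N_h^n)$ is injective by \cite[Corollary 3.8]{Stu_geom}, and $(t_{a_1}t_{a_2})^3\ne t_d^l$ in $B_3$). So any correct ``centralizer of a geometric subgroup'' reduction must retain the centers of $\Mcg(A)$ (and, when $(g,n)=(2,0)$, of $\Mcg(\Sigma)\cong\Mcg(\Sigma_{1,1})$, whose center is $\langle(t_{a_5}t_{a_6})^3\rangle$), not only the boundary twists.

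The error is then compounded, and partially hidden, by a wrong chain relation in your last step: for a $2$-chain the relation is $(t_{a_1}t_{a_2})^6=t_d$ (and $(t_{a_5}t_{a_6})^6=t_c$ when $(g,n)=(2,0)$, as the paper notes), not $(t_{a_1}t_{a_2})^3=t_d$. With the correct exponent your argument would output $\langle t_c,t_d\rangle$, which is a proper (index two) subgroup of the group in the statement and fails to contain $(t_{a_1}t_{a_2})^3$ — i.e.\ it would contradict the lemma rather than prove it. The paper avoids this by taking an arbitrary $x\in C_H\{t_{a_1},t_{a_2}\}$, cutting along $c$ and $d$, and identifying its restriction to each piece: on $S'\cong\Sigma_{g-1,1}^n$ it lies in $C_{\Mcg(S')}(\PMcg(S'))=C(\Mcg(S'))$ by \cite[Proposition 5.5 and Theorem 5.6]{PR} (this is where $t_c$, or $(t_{a_5}t_{a_6})^3$ in the exceptional case, appears), on $S''\cong\Sigma_{1,1}$ it is a power of $(t_{a_1}t_{a_2})^3$ because that generates $C(\Mcg(S''))$, and on $N\cong N_{1,2}$ it is a product of the boundary twists $t_c,t_d$. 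You would need to replace your ``boundary twists times $\Mcg(P,\partial P)$'' step by this kind of piecewise analysis (or by a centralizer theorem that explicitly includes the centers of the subsurface groups), and correct the chain relation, for the proof to go through.
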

\begin{proof} Let $d$ be the boundary of a regular neighbourhood of $a_1\cup a_2$ (torus
with one hole) and set $\rho=(t_{a_1}t_{a_2})^3$. Then $\rho^2=t_d$.
Since $t_c$ can be expressed in terms of twists of $D'$, we have
$C_H\{t_{a_1},t_{a_2}\}\subset C_{\Mcg(N_h^n)}\{t_c,t_d\}$. It
follows that any $x\in C_H\{t_{a_1},t_{a_2}\}$  can be represented
by a diffeomorphism, also denoted by $x$, equal to the identity on
regular neighbourhoods of $c$ and $d$. The complement of the union
of such neighbourhoods  has three connected components $S'$, $S''$
and $N$, where $S'$ is diffeomorphic to $\Sigma_{g-1,1}^n$
(containing $a_5$ and $a_6$), $S''$ is diffeomorphic to
$\Sigma_{1,1}$ (containing $a_1$ and $a_2$), and $N$ is diffeomorphic
to $N_{1,2}$. Clearly $x$ preserves each of these components.
Furthermore,  $x$ restricts to a diffeomorphism $x'$ of $S'$, which
commutes with all twists of $D'$ up to isotopy. Since $\PMcg(S')$ is
generated by twists of $D'$,  $x'\in C_{\Mcg(S')}(\PMcg(S'))$. By
\cite[Proposition 5.5 and Theorem 5.6]{PR},
$C_{\Mcg(S')}(\PMcg(S'))=C(\Mcg(S'))$  is the infinite cyclic group
generated either by $t_c$ if $(g,n)\ne (2,0)$, or by
$(t_{a_5}t_{a_6})^3$ if $(g,n)=(2,0)$ (note that
$t_c=(t_{a_5}t_{a_6})^6$ if $(g,n)=(2,0)$). Thus $x'$ is isotopic on
$S'$ to some power of $t_c$ (or $(t_{a_5}t_{a_6})^3$). Analogously,
$x$ restricts to a diffeomorphism $x''$ of $S''$, isotopic on $S''$
to some power of $\rho$. Finally, since $\Mcg(N)$ is generated by the boundary twists,  the restriction of $x$ to $N$ is
isotopic to the product of some power of $t_c$ and some power of
$t_d$.
\end{proof}

\begin{Lemma}\label{L:idonPMcg:odd}
Let $h=2g+1$ for $g\ge 2$ and suppose that $\varphi$ is an automorphism of $\Mcg(N_{h}^n)$ such
that $\varphi(t)=t$ for all $t\in D$. Then $\varphi$ restricts to
the identity on $\PMcg^+(N_{h}^n)$.
\end{Lemma}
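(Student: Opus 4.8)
The plan is to use Proposition~\ref{P:gensPMCG}: $\PMcg^+(N_h^n)$ is generated by $D\cup\{u\}$, and since $\varphi$ already fixes every element of the subgroup $\langle D\rangle$ generated by $D$, it suffices to prove $\varphi(u)=u$. I would set $z=u^{-1}\varphi(u)$ and aim for $z=1$. The key remark is that for any $t\in D$ one has $\varphi(u)\,t\,\varphi(u)^{-1}=\varphi(u)\varphi(t)\varphi(u)^{-1}=\varphi(utu^{-1})$, so if $utu^{-1}\in\langle D\rangle$ then $\varphi(utu^{-1})=utu^{-1}$, hence $\varphi(u)\,t\,\varphi(u)^{-1}=utu^{-1}$, i.e. $z$ commutes with $t$. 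Thus the whole lemma reduces to showing that $utu^{-1}\in\langle D\rangle$ for every $t\in D$: this forces $z\in C_{\Mcg(N_h^n)}(D)\subseteq C_{\Mcg(N_h^n)}(\Gamma)=1$ by Lemma~\ref{L:CTodd} (recall $\Gamma\subset D$), so $\varphi(u)=u$ and $\varphi$ restricts to the identity on $\langle D\cup\{u\}\rangle=\PMcg^+(N_h^n)$.

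To check $utu^{-1}\in\langle D\rangle$, I would argue as follows. The mapping class $u$ is represented by a diffeomorphism supported in a disc of $\theta_1(N_{5,1})$ containing only the two transposed crosscaps, which by Lemma~\ref{L:embNk1}(a) and inspection of Figures~\ref{gij} and \ref{tree1odd} is disjoint from every circle of $D$ except (at most) $a_1,a_2,a_3,a_4$; so $utu^{-1}=t$ for all the other twists of $D$. The circle $a_2=\theta_1(c_{4,5})$ runs through exactly those two crosscaps, hence $u(a_2)$ is isotopic to $a_2$ and $ut_{a_2}u^{-1}=t_{a_2}^{\pm1}\in\langle D\rangle$. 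Since $t_{a_2}u$ is a crosscap slide, $(t_{a_2}u)^2$ is a Dehn twist about a two-sided circle lying in $\theta_1(N_{5,1})$ and carried by $D$ (a chain relation among twists of $D$ expresses the twist about it), so $(t_{a_2}u)^2\in\langle D\rangle$ and therefore $u^2\in\langle D\rangle$. Now the relation $t_{a_3}=t_{a_2}u^{-1}t_{a_1}ut_{a_2}^{-1}$ coming from Lemma~\ref{L:embNk1}(c) gives $u^{-1}t_{a_1}u=t_{a_2}^{-1}t_{a_3}t_{a_2}\in\langle D\rangle$, whence $ut_{a_1}u^{-1}=u^2(t_{a_2}^{-1}t_{a_3}t_{a_2})u^{-2}\in\langle D\rangle$; and conjugating that relation by $u$ gives $ut_{a_3}u^{-1}=(ut_{a_2}u^{-1})t_{a_1}(ut_{a_2}u^{-1})^{-1}=t_{a_2}^{\pm1}t_{a_1}t_{a_2}^{\mp1}\in\langle D\rangle$. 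The only circle left is $a_4=\theta_1(c_{3,4})$: here I would read $u(a_4)$ off Figure~\ref{gij} (the image of $c_{3,4}$ under the crosscap transposition) and verify, using the standard crosscap-slide relations, that $ut_{a_4}u^{-1}=t_{u(a_4)}$ is a word in the twists of $D$.

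The hard part will be this last case. The reduction, the cases settled by disjointness, and the ones handled by the identity $u^2\in\langle D\rangle$ are routine; but to see $ut_{a_4}u^{-1}\in\langle D\rangle$ one must follow precisely how the crosscap transposition moves the circle $c_{3,4}$ inside $N_{5,1}$ and then recognize the resulting curve as one carried by the tree $D$, using the explicit geometry of Figures~\ref{gij} and \ref{tree1odd} together with the known relations for crosscap slides. If it should turn out that the circles $a_7,a_8$ also meet the support disc of $u$, the analogous computation is needed for them as well. In any case, once every twist of $D$ is shown to conjugate back into $\langle D\rangle$ under $u$, Lemma~\ref{L:CTodd} finishes the proof at once.
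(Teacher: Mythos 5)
Your opening reduction is valid: if $utu^{-1}\in\langle D\rangle$ held for every $t\in D$, then $u^{-1}\varphi(u)$ would centralize $\Gamma$, and Lemma~\ref{L:CTodd} would give $\varphi(u)=u$. The problem is that the key claim is false, not merely left to a figure check. Let $\mu$ be the one-sided circle disjoint from all circles of $D$ (the core of the M\"obius band removed to form $\Sigma_{g,n+1}$); it is unique up to isotopy, and every element of $\langle D\rangle$ is supported in $\Sigma_{g,n+1}$, hence fixes the isotopy class of $\mu$; conversely, for an essential circle $x$, $t_x\in\langle D\rangle$ forces $i(x,\mu)=0$ (use that $\langle D\rangle$ is the image of $\Mcg(\Sigma_{g,n+1})$, as in the proof of Lemma~\ref{L:CTodd}, and \cite{SM1}). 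Now $(t_{a_2}u)^2=u^2$ is the Dehn twist about the boundary circle $c$ of the Klein bottle with one hole containing the two transposed crosscaps. This $c$ bounds a nonorientable surface on \emph{both} sides, so it is not the boundary of a chain neighbourhood and no chain relation in $D$ produces $t_c$; worse, $t_c\notin\langle D\rangle$ at all: a mod $2$ homology computation (disjointness from $a_2,a_4,a_5,a_6$ together with one-sidedness forces $\mu$ to cross the core of each of the five crosscaps of $N_{5,1}$) gives $i(c,\mu)\ge 2$, so $t_c$ does not fix the class of $\mu$. Hence $u^2\notin\langle D\rangle$ and your derivation of $ut_{a_1}u^{-1}\in\langle D\rangle$ collapses. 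Moreover the package cannot be repaired: $ut_{a_j}u^{-1}\in\langle D\rangle$ is equivalent to $i(a_j,u^{-1}(\mu))=0$; this holds for $j=2,3$ and for the circles missing the support of $u$, and if it also held for $j=1$ and $j=4$ (your admitted ``hard case''), then $u^{-1}(\mu)$ would be disjoint from every circle of $D$, hence isotopic to $\mu$, whence $t_c^{\pm 1}(\mu)=u^{\mp 2}(\mu)\simeq\mu$ --- the same contradiction. So at least one of $ut_{a_1}u^{-1},\,ut_{a_4}u^{-1}$ lies outside $\langle D\rangle$, and no inspection of the figures will show otherwise.

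This is precisely why the paper does something else: fixing $D$ pointwise only pins $\varphi(u)$ down to $(t_{a_1}t_{a_2})^{-3m}u$, which is extracted from the relations in $\mathcal{M}=\langle u,t_{a_1},t_{a_2}\rangle\cong\Mcg(N_{3,1})$ together with the centralizer computation of Lemma~\ref{L:centr12}, and a genuine geometric argument (the estimate $i(t_d^m(a_4),y(a_4))\ge 4|m|$, proved by passing to an essential orientable subsurface) is then needed to force $m=0$. Any correct proof must confront this residual $(t_{a_1}t_{a_2})^{3m}$ ambiguity; your scheme, which would make it vanish for free, cannot be completed.
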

\begin{proof}
By Proposition \ref{P:gensPMCG}, it suffices to prove
$\varphi(u)=u$. Let $\mathcal{M}$ be the subgroup of $\Mcg(N_h^n)$
generated by $u$, $t_{a_1}$ and $t_{a_2}$. By \cite[Theorem
4.1]{PSz}, $\mathcal{M}$ is isomorphic to $\Mcg(N_{3,1})$. More
specifically, it is the mapping class group of the nonorientable
subsurface of $N_h^n$ bounded by the curve $c$ from Lemma
\ref{L:centr12}. Set $u_2=u$ and
$u_1=t_{a_2}^{-1}t_{a_1}^{-1}u^{-1}t_{a_1}t_{a_2}$. The following
relations are satisfied in $\mathcal{M}$ (see \cite{PSz}).
\begin{align*}
&(1)\ t_{a_2}t_{a_1}t_{a_2}=t_{a_1}t_{a_2}t_{a_1}\qquad (2)\ u_2u_1u_2=u_1u_2u_1\\
&(3)\  u_2u_1t_{a_2}=t_{a_1}u_2u_1\qquad (4)\ t_{a_2}u_1u_2=u_1u_2t_{a_1}\\
&(5)\ u_it_{a_i}u_i^{-1}=t_{a_i}^{-1}\textrm{\ for\ }i=1,2\qquad
(6)\ u_2t_{a_1}t_{a_2}u_1=t_{a_1}t_{a_2}
\end{align*}
Set $e=t_{a_2}u_2^{-1}t_{a_1}u_2t_{a_2}^{-1}$. Note that  $e$ is a
Dehn twist about the curve $t_{a_2}u_2^{-1}(a_1)=a_3$ (see (b) and (c)
of Lemma \ref{L:embNk1}). In particular $\varphi(e)=e$. Set
$v=eu_1$.
 We have
 \begin{align*}
&e=t_{a_2}u_2^{-1}t_{a_1}u_2t_{a_2}^{-1}\stackrel{(5)}{=}t_{a_2}u_2^{-1}t_{a_1}t_{a_2}u_2
\stackrel{(6)}{=}t_{a_2}t_{a_1}t_{a_2}u_1u_2\\
&v=t_{a_2}t_{a_1}t_{a_2}u_1u_2u_1
\end{align*}
It follows from relations (1,3,4,5) that
$vt_{a_i}v^{-1}=t_{a_i}^{-1}$ for $i=1,2$, and
$v^2=(u_1u_2u_1)^{2}=t_c$ (for the last equality see
\cite[Subsection 3.2]{PSz}). Observe that $v^{-1}\varphi(v)$
commutes with all twists of $D'$, where $D'$ is as in Lemma \ref{L:centr12}, and also with $t_{a_i}$ for
$i=1,2$. Suppose that $(g,n)\ne (2,0)$. By Lemma \ref{L:centr12},
$\varphi(v)=vt_c^k(t_{a_1}t_{a_2})^{3m}$ for some
$k,m\in\mathbb{Z}$. We have $t_c=\varphi(v^2)=t_c^{2k+1}$,
hence $k=0$. If $(g,n)=(2,0)$, then by Lemma \ref{L:centr12},
$\varphi(v)=v(t_{a_5}t_{a_6})^{3k}(t_{a_1}t_{a_2})^{3m}$, and
because $t_c=\varphi(v^2)=t_c^{k+1}$, hence $k=0$. We have
$\varphi(v)=v(t_{a_1}t_{a_2})^{3m}$. It follows that
$\varphi(u_1)=u_1(t_{a_1}t_{a_2})^{3m}$ and
$\varphi(u_2)=(t_{a_1}t_{a_2})^{-3m}u_2$.

Set $t_d=(t_{a_1}t_{a_2})^6$ ($d$ bounds a regular neighbourhood of
$a_1\cup a_2$) and $y=t_{a_2}u_2$. Observe that the curves $y(a_4)$
and  $a_4$ are disjoint up to isotopy (recall $a_4=c_{3,4}$), hence
$yt_{a_4}y^{-1}$ commutes with $t_{a_4}$. By applying $\varphi^2$ we
obtain that $t_d^{-m}y t_{a_4}y^{-1}t_d^{m}$ commutes with
$t_{a_4}$. By \cite[Proposition 4.7]{SM1} it follows that
$i(t_d^{m}(a_4),y(a_4))=0$. We will show that on the other hand $i(t_d^{m}(a_4),y(a_4))\ge 4|m|$, which implies $m=0$ and finishes the proof.
Set $a'_4=y(a_4)$ and  note that $a_4'$, $a_4$ and $a_1$ are pairwise disjoint, and
each of them intersects $a_2$ in a single point. We also have $i(a_4,d)=i(a'_4,d)=2$. Let
$M$ be a regular neighbourhood of $a'_4\cup a_4\cup a_1\cup a_2$, which is  a 3-holed torus (Figure \ref{fig:M}).
The complement of the interior of $M$ in $N_h^n$ is the disjoint union of a
M\"obius band and a subsurface diffeomorphic to $\Sigma_{g-2,2}^n$. In
particular, $M$ is an essential subsurface of $N_h^n$ in the sense of \cite[Definition 3.1]{Stu_geom}, and hence, by
\cite[Proposition 3.3]{Stu_geom}, $i(t_d^{m}(a_4),a'_4)$ is equal
to the geometric intersection number $i_M(t_d^{m}(a_4),a'_4)$ of
$t_d^{m}(a_4)$ and $a'_4$ treated as curves on $M$. Let $\widetilde{M}$
be the 2-holed torus obtained from $M$ by gluing a disc along the
boundary component $f$ (see Figure \ref{fig:M}). Clearly
$i_M(t_d^{m}(a_4),a'_4)\ge i_{\widetilde{M}}(t_d^{m}(a_4),a'_4)$, and
since $a_4'$ is isotopic on $\widetilde{M}$ to $a_4$, we have
$i_{\widetilde{M}}(t_d^{m}(a_4),a'_4)=i_{\widetilde{M}}(t_d^{m}(a_4),a_4)$. Finally, by
\cite[Proposition 3.3]{PR}
$i_{\widetilde{M}}(t_d^{m}(a_4),a_4)=|m|{i_{\widetilde{M}}(d,a_4)}^2=4|m|$. Summarising, we have
\[i(t_d^{m}(a_4),a'_4)=i_M(t_d^{m}(a_4),a'_4)\ge i_{\widetilde{M}}(t_d^{m}(a_4),a'_4)=4|m|
\qedhere
\]
\end{proof}

\begin{figure}[hbt]
 \begin{center}
  \includegraphics[width=5cm]{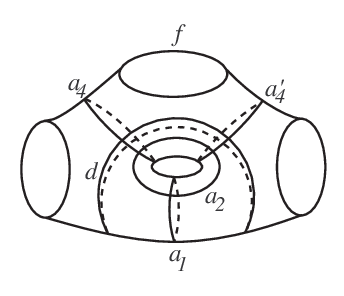}
 \caption{\label{fig:M}The regular neighbourhood $M$ of $a'_4\cup a_4\cup a_1\cup a_2$}
 \end{center}
\end{figure}

\begin{Lemma}\label{L:idonPMcg:even}
Let $h=2g+2$ for $g\ge 2$ and suppose that $\varphi$ is an automorphism of $\Mcg(N_{h}^n)$ such
that $\varphi(t)=t$ for all $t\in E\cup\{t_a\}$. Then $\varphi$
restricts to an inner automorphism of $\PMcg^+(N_{h}^n)$.
\end{Lemma}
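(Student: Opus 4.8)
The plan is to follow the proof of Lemma~\ref{L:idonPMcg:odd}, the essential new feature being that the centralizer of the relevant family of twists is now nontrivial. By Proposition~\ref{P:gensPMCG}, $\PMcg^+(N_h^n)$ is generated by $u$ together with $E\cup\{t_a\}$, on which $\varphi$ is already the identity. Hence it suffices to produce $f\in\PMcg^+(N_h^n)$ centralizing $E\cup\{t_a\}$ with $\varphi(u)=fuf^{-1}$, for then $\varphi$ and the inner automorphism $\mathrm{conj}_f$ agree on a generating set of $\PMcg^+(N_h^n)$. The claim will be that one may take $f=t_a^{k}$ for a suitable $k\in\mathbb{Z}$; here $t_a\in\PMcg^+(N_h^n)$, and $t_a$ centralizes $E\cup\{t_a\}$ because every curve of $E$ lies in $\Sigma_{g,n+2}$, hence can be isotoped off the core $a$ of the annulus removed in forming $\Sigma_{g,n+2}$.

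First I would set up the local picture as in the odd case. Using Lemma~\ref{L:embNk1}(b)--(e), the crosscap transposition $u$ together with suitable twists among $t_{a_0}=t_{\theta_2(c_{5,6})}$, $t_{a_1}=t_{\theta_2(c_{1,4})}$, $t_{a_2}=t_{\theta_2(c_{4,5})}$ and $t_a=t_{\theta_2(c_{1,6})}$ generate, by \cite{PSz} and \cite{Stu_geom}, a subgroup $\mathcal{M}$ isomorphic to the mapping class group of a small nonorientable surface with boundary --- concretely the complement in $N_h^n$ of a regular neighbourhood of the curves supporting a suitable subfamily $E'\subset E$, the even-genus analogue of the family $D'$ of Lemma~\ref{L:centr12}. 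Inside $\mathcal{M}$ one has braid, reverse-braid and crosscap-slide relations relating $u$ to those twists, parallel to relations~(1)--(6) in the proof of Lemma~\ref{L:idonPMcg:odd}; and by Lemma~\ref{L:embNk1} the images of the defining curves under $u^{\pm1}$ are again among the $a_i$, so that the analogue of the element $e$ there is a twist about a $\varphi$-fixed curve and is therefore fixed by $\varphi$. Combined with Lemmas~\ref{L:A2k+1} and~\ref{L:D6}, this yields enough words in $u$ fixed by $\varphi$ to determine $\varphi(u)$ up to the centralizer of those twists.

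Next I would prove the even-genus analogue of Lemma~\ref{L:centr12}: with $H=C_{\Mcg(N_h^n)}(E')$, the centralizer in $H$ of the twists generating $\mathcal{M}$ (together with $u$) is free abelian, and compared with the odd case it acquires exactly one further generator, $t_a$ --- this is the point at which Lemma~\ref{L:CTeven} takes the place of Lemma~\ref{L:CTodd}, and the reason the resulting automorphism is inner rather than trivial. Running the computation of the proof of Lemma~\ref{L:idonPMcg:odd} with this centralizer gives $\varphi(u)=t_a^{k}\,u\,t_a^{-k}\cdot(t_pt_q)^{3m}$ for some $k,m\in\mathbb{Z}$ and a $2$-chain $(p,q)$ of two-sided curves (an analogue of $(a_1,a_2)$). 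A relation expressing a power of the relevant element $v$ in terms of boundary twists --- the analogue of $v^2=t_c$, now possibly involving $t_a$ --- together with $\varphi$-invariance of those boundary twists eliminates one parameter, and the intersection-number estimate of the final paragraph of Lemma~\ref{L:idonPMcg:odd}, via \cite[Proposition~3.3]{Stu_geom}, \cite[Proposition~3.3]{PR} and Lemma~\ref{L:A2k+1} applied to $t_{a_4}$ and a curve of the form $y(a_4)$, forces $m=0$. Then $\varphi(u)=t_a^{k}ut_a^{-k}$, and $f=t_a^{k}$ completes the proof.

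The step I expect to be the main obstacle is precisely this last reduction. In the odd case the triviality of $C_{\Mcg(N_h^n)}(\Gamma)$ (Lemma~\ref{L:CTodd}) immediately forced $\varphi(u)$ onto $u$; here one must instead show that the only residual freedom is conjugation by a power of $t_a$, and in particular that no crosscap slide intervenes and that the spurious factor $(t_pt_q)^{3m}$ vanishes. This comes down to checking that $\varphi(u)$, together with the $\varphi$-fixed twists, generates a copy of $\mathcal{M}$ differing from the standard one only by an ambient isotopy supported near $a$, which in turn requires identifying carefully, for the regular neighbourhood of the $\varphi$-image of $E'$, which boundary components bound M\"obius bands, which bound (possibly punctured) discs, and which is isotopic to $a$. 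That bookkeeping, together with the intersection-number bound ruling out the stray twist, is the heart of the argument.
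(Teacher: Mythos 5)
Your reduction of the statement to ``find $f$ centralizing $E\cup\{t_a\}$ with $\varphi(u)=fuf^{-1}$, and show one can take $f=t_a^k$'' is exactly the right target (and matches where the paper ends up), but the route you propose to get there has a genuine gap, and it sits precisely at the place you yourself flag as ``the heart of the argument''. Your plan transplants the odd-genus argument: a copy of $\mathcal{M}\cong\Mcg(N_{3,1})$, an analogue of the element $e$, and an even-genus analogue of Lemma \ref{L:centr12} whose centralizer is free abelian with one extra generator $t_a$. Two of the specific claims supporting this do not hold as stated. First, you invoke Lemma \ref{L:embNk1} to say that the $u^{\pm1}$-images of the defining curves are again among the $a_i$, so that an analogue of $e$ is a twist about a $\varphi$-fixed curve; but the identity of type (c) ($a_3=\theta_1(t_{c_{4,5}}u^{-1}(c_{1,4}))$) is stated only for the odd embedding $\theta_1$, and in the even case it is false that $u$ carries the relevant curves back into the fixed family --- indeed $t_a$ and $ut_au^{-1}$ generate a free group of rank $2$, so $u(a)$ is genuinely new. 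Second, and more seriously, the ambiguity in $\varphi(u)$ is not measured by an abelian centralizer ``with exactly one further generator $t_a$'': the element $\varphi(u)$ is only constrained to be supported on the twice-holed Klein bottle $K$ (the regular neighbourhood of $c_{1,6}\cup c_{5,6}\cup c_{6,6}$, with boundary $a_1\cup a_3$) and to stabilize $a_0$, and the stabilizer $\mathcal{S}$ of $a_0$ in $\Mcg(K)$ contains the nonabelian factor $F_2=\langle t_a, ut_au^{-1}\rangle$. Controlling this nonabelian freedom is the actual content of the even case, and your proposal has no mechanism for it; asserting that ``running the computation of Lemma \ref{L:idonPMcg:odd}'' yields $\varphi(u)=t_a^kut_a^{-k}(t_pt_q)^{3m}$ is exactly the step that needs proof.

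For comparison, the paper does not imitate the odd case here. It shows $\varphi(u)\in\Mcg(K)$ and $\varphi(u)$ stabilizes $a_0$ (since $\varphi(u)t_{a_0}\varphi(u)^{-1}=t_{a_0}^{-1}$), writes down an explicit presentation of the stabilizer $\mathcal{S}\subset\Mcg(K)$ with generators $t_{a_1},t_{a_0},t_a,v=t_au$ (using $\mathcal{S}^+\cong\mathbb{Z}^3\times F_2$ and $v^2=t_{a_1}t_{a_3}$ from \cite[Lemma 7.8]{Szep_Osaka}), and passes to the quotient $\mathcal{S}/H\cong\mathbb{Z}\ast\mathbb{Z}_2=\langle A,V\mid V^2=1\rangle$ by the normal subgroup $H=\langle t_{a_0},t_{a_1},t_{a_1}t_{a_3}\rangle$ on which $\varphi$ is the identity. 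A normal-form argument in the free product forces the induced automorphism to send $V$ to $A^nVA^{-n}$, whence $\varphi(v)=t_{a_1}^kt_{a_3}^lt_{a_0}^mt_a^nvt_a^{-n}$; the relation $v^2=t_{a_1}t_{a_3}$ kills $k,l$, conjugating by $t_a^{-n}$ normalizes $n$, and an intersection-number estimate (with $y=t_{a_0}u$ acting on $a_2$, not $a_4$) kills the residual factor $t_{a_0}^m$ --- note the stray factor is a power of $t_{a_0}$, not of a chain element $(t_pt_q)^3$. So while your final intersection-number step and the identification $f=t_a^k$ are in the right spirit, the middle of your argument --- producing a $\varphi$-fixed conjugator and reducing the ambiguity to an abelian group --- is missing and cannot be supplied by the lemmas you cite; some substitute for the paper's presentation-and-free-product analysis of $\mathcal{S}$ is required.
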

\begin{proof}
Let $K$ denote the nonorientable connected component of the surface
obtained by removing from $N_h^n$ an open regular neighbourhood of
$a_1\cup a_3$. Thus, $K$ is a Klein bottle with two holes, and the
other connected component is diffeomorphic to $\Sigma_{g-1,2}^n$.
Furthermore, by (e) of Lemma \ref{L:embNk1}, $K$ is a regular
neighbourhood of $c_{1,6}\cup c_{5,6}\cup c_{6,6}$.  Using \cite[Corollary 3.8]{Stu_geom} we will treat
$\Mcg(K)$  as a subgroup of $\Mcg(N_h^n)$.

Set $u'=\varphi(u)$. Since $u'$ commutes with all twists of $E$
supported on $N_h^n\backslash K$, it can be represented by a
diffeomorphism supported on $K$, by a similar argument as in the
proof of Lemma \ref{L:centr12}. Hence  $u'\in\Mcg(K)$. Since
$u't_{a_0}{u'}^{-1}=t_{a_0}^{-1}$, $u'$ preserves the isotopy class
of $a_0$ by \cite[Proposition 4.6]{SM1}.
Let $\mathcal{S}$ denote the subgroup of $\Mcg(K)$ consisting of
elements fixing the isotopy class of $a_0$, and let $\mathcal{S}^+$
be the subgroup of index $2$ of $\mathcal{S}$ consisting of elements
preserving orientation of a regular neighbourhood of $a_0$. Note
that every element of $\mathcal{S}^+$ can be represented by a
diffeomorphism equal to the identity on a neighbourhood of $a_0$. By
cutting $K$ along $a_0$ we obtain a four-holed sphere, and it
follows from the structure of the mapping class group of this
surface, that $\mathcal{S}^+$ is isomorphic to $\mathbb{Z}^3\times
F_2$, where the factor $\mathbb{Z}^3$ is generated by $t_{a_1}$,
$t_{a_3}$ and $t_{a_0}$, and $F_2$ is the free group of rank $2$
generated by $t_a$ and $ut_au^{-1}$.

Set $v=t_au$. By \cite[Lemma 7.8]{Szep_Osaka} we have
$v^2=t_{a_1}t_{a_3}$. Note that
$v\in\mathcal{S}\backslash\mathcal{S}^+$. It follows from the
previous paragraph, that $\mathcal{S}$ admits a presentation with
generators $t_{a_1}$, $t_{a_0}$, $t_a$ and $v$, and the defining
relations
\begin{align*}
&t_{a_0}t_a=t_at_{a_0},\qquad vt_{a_0}=t_{a_0}^{-1}v,\qquad v^2t_a=t_av^2\\
&t_{a_1}v=vt_{a_1},\qquad t_{a_1}t_{a_0}=t_{a_0}t_{a_1},\qquad
t_{a_1}t_a=t_at_{a_1}
\end{align*}
Let $H$ denote the subgroup generated by $t_{a_0}$, $t_{a_1}$ and
$v^2=t_{a_1}t_{a_3}$. It follows from above presentation that $H$ is
normal in $\mathcal{S}$ and $\mathcal{S}/H$ is isomorphic to the
free product $\mathbb{Z}\ast\mathbb{Z}_2$. More specifically,
denoting by $A$ and $V$ the images in $\mathcal{S}/H$ of
respectively $t_a$ and $v$, we see that $\mathcal{S}/H$  has the
presentation $\langle A, V\,|\,V^2=1\rangle$.

Since $\varphi(t_{a_i})=t_{a_i}$ for $i=0,1,3$ and
$\varphi(t_a)=t_a$  and $u'=\varphi(u)\in\mathcal{S}$, $\varphi$
preserves $\mathcal{S}$ and, by the same argument, $\varphi^{-1}$
also preserves  $\mathcal{S}$, hence $\varphi|_\mathcal{S}$ is an
automorphism of $\mathcal{S}$. Since $\varphi$ is equal to the
identity on $H$, it induces $\phi\in\mathrm{Aut}(\mathcal{S}/H)$. We
have $\phi(A)=A$. Note that every element of order $2$ in
$\mathcal{S}/H$ is conjugate to $V$. In particular $\phi(V)$ is
conjugate to $V$. It is an easy exercise to check, using the normal
form of elements of the free product, that in order for $\phi$ to be
surjective, we must have $\phi(V)=A^nVA^{-n}$ for some
$n\in\mathbb{Z}$.

It follows that
$\varphi(v)=t_{a_1}^kt_{a_3}^lt_{a_0}^{m}t_{a}^nvt_a^{-n}$ for some
integers $l$, $k$ and $m$. We have
$t_{a_1}t_{a_3}=\varphi(v^2)=t_{a_1}^{2k+1}t_{a_3}^{2l+1}$, hence
$k=l=0$. By composing $\varphi$ with the inner automorphism
$x\mapsto t_a^{-n}xt_a^n$ we may assume $n=0$ (note that $t_a$
commutes with all $t_{a_i}$). Thus $\varphi(u)=t_{a_0}^mu$.

Set $y=t_{a_0}u$ and note that $y(a_2)$ is disjoint from $a_2$, hence $yt_{a_2}y^{-1}$ commutes
with $t_{a_2}$. By applying $\varphi$ we obtain that $t_{a_0}^myt_{a_2}y^{-1}t_{a_0}^{-m}$ commutes
with $t_{a_2}$, which gives $i(t_{a_0}^{-m}(a_2),y(a_2))=0$. On the other hand, by a similar
argument as in the proof of Lemma \ref{L:idonPMcg:odd}, we have
$i(t_{a_0}^{-m}(a_2),y(a_2))\ge |m|$,  hence $m=0$.
\end{proof}

\medskip

\begin{proof}[\it Proof of Theorem \ref{MainThm1}]
By Lemma \ref{L:diff} we can assume $S_1=S_2$.
Suppose that $\varphi$ is any automorphism of $\Mcg(N_h^n)$  for $h\ge 5$. By
Lemma \ref{L:almostinner}, there exists $f\in\Mcg(N_h^n)$ such that
$\varphi'$ defined by $\varphi'(x)=f^{-1}\varphi(x)f$ for
$x\in\Mcg(N_h^n)$ is the identity on $D$ (if $h$ is odd) or
$E\cup\{t_a\}$ (if $h$ is even). By Lemma \ref{L:idonPMcg:odd} or
Lemma \ref{L:idonPMcg:even}, $\varphi'$ restricts to an inner
automorphism of $\PMcg^+(N_h^n)$. Thus, by composing $\varphi'$ with
an inner automorphism we obtain $\varphi''$, which restricts to the
identity on $\PMcg^+(N_h^n)$. Since
$C_{\Mcg(N_h^n)}(\PMcg^+(N_h^n))$ is contained in $C_{\Mcg(N_h^n)}(\mathcal{T}(N_h^n))$, it is trivial by \cite[Theorem
6.2]{SM1}. Lemma \ref{Ivanov1} implies that $\varphi''$ is trivial,
hence $\varphi$ is inner.
\end{proof}

\medskip

\noindent{\bf Acknowledgements.} The authors are grateful to Luis Paris and the anonymous referee for valuable comments and suggestions.
%%%%%%%%%%%%%%%%%%%%%%%%%%%%%%%%%%%%%%%%%%%%%%%%%%%%%%%%%%%%%%%%%%%%%%%%%%%%%%%%%%%%%%%%%%%%
%%%%%%%%%%%%%%%%%%%%%%%%%%%%%%%%%%%%%%%%%%%%%%%%%%%%%%%%%%%%%%%%%%%%%%%%%%%%%%%%%%%%%%%%%%%%

\bigskip
\providecommand{\bysame}{\leavevmode\hboxto3em{\hrulefill}\thinspace}


\begin{thebibliography}{1}
\bibitem{AS} {J. Aramayona \and J. Souto}, Homomorphisms between mapping class groups, {Geom. Topol.} \textbf{16} (2012), 2285--2341.
%
\bibitem{A} F. Atalan, Outer automorphisms of mapping class groups of
nonorientable surfaces, Internat. J. Algebra Comput. {\bf 20}(3)
(2010) 437--456.
%
\bibitem{BLM} J. S. Birman, A. Lubotzky, and J. McCarthy,
Abelian and solvable subgroups of the mapping class groups, Duke Math. J.
{\bf 50} (1983), 1107-–1120.
%
\bibitem{Cast} {F. Castel}, Geometric representations of the braid groups, Ast\'erisque {\bf 378},  Soc. Math. France, Paris 2016.
%
\bibitem{I1} N. V. Ivanov, Automorphisms of Teichmuller modular
groups, in Lecture Notes in Math. {\bf 1346}, Springer, Berlin
1988, pp. 199--270.

\bibitem{IM} N. V. Ivanov and J. D. McCarthy, On injective homomorphisms between Teichm\"uller  modular groups I, Invent. Math. \textbf{135} (1999), 425--486.

\bibitem{K} M. Korkmaz, Mapping class groups of nonorientable surfaces, Geom.
Dedicata {\bf 89} (2002) 109–-133.
%
\bibitem{Kuno} E. Kuno, Abelian subgroups of the mapping class groups for non-orientable surfaces, preprint 2016,  arXiv:1608.04501.
%
\bibitem{LabPar} {C. Labru\`ere and L. Paris}, Presentations for the punctured mapping class
groups in terms of Artin groups, Algebr. Geom. Topol. \textbf{1} (2001), 73--114.
%
\bibitem{PR} L. Paris and D. Rolfsen, Geometric subgroups of mapping class groups, J. Reine. Angew.
Math. \textbf{521} (2000), 47--83.
%
\bibitem{PSz} L. Paris and B. Szepietowski, A presentation for the mapping class group of a
nonorientable surface,  Bull. Soc. Math. France {\bf 143} (2015), 503--566.
%
\bibitem{Szep_Osaka} B. Szepietowski, A presentation for the mapping class group of a non-orientable surface
from the action on the complex of curves, Osaka J. Math. {\bf 45}
(2008), 283--326.
%
\bibitem{SM1} M. Stukow, Dehn twists on nonorientable surfaces, Fund. Math.
{\bf 189} (2006), 117--147.

\bibitem{SM2} M. Stukow, The twist subgroup of the mapping class group of
a nonorientable surface, Osaka J. Math. {\bf 46} (2009) 717--738.
%
\bibitem{Stu_geom} M. Stukow, {Commensurability of geometric subgroups of mapping class groups,} {Geom. Dedicata} \textbf{143} (2009), 117--142.
%
\bibitem{Wu} Y. Wu, Canonical reducing curves of surface homeomorphism, Acta Math. Sinica (N.S.) {\bf 3} (1987), 305-–313.
%
\end{thebibliography}
\end{document}